\documentclass[11pt]{amsart}
\addtolength{\oddsidemargin}{-.5in}
\addtolength{\evensidemargin}{-.5in}
\addtolength{\textwidth}{1.0in} 
\usepackage{pstricks}
\usepackage{color}
\setcounter{MaxMatrixCols}{25}

\theoremstyle{plain}
\newtheorem{thm}{Theorem}[section]
\newtheorem{theorem}[thm]{Theorem}

\newtheorem{lemma}[thm]{Lemma}

\newtheorem{proposition}[thm]{Proposition}
\theoremstyle{definition}
\newtheorem{remark}[thm]{Remark}

\newtheorem{definition}[thm]{Definition}

\numberwithin{equation}{section}




\newcommand{\sE}{{\mathcal E}}

\newcommand{\sO}{{\mathcal O}}

\newcommand{\sS}{{\mathcal S}}

\newcommand{\A}{{\mathbb A}}

\newcommand{\C}{{\mathbb C}}

\newcommand{\F}{{\mathbb F}}

\renewcommand{\P}{{\mathbb P}}
\newcommand{\Q}{{\mathbb Q}}
\newcommand{\R}{{\mathbb R}}

\newcommand{\Z}{{\mathbb Z}}

\title [Elliptic fibrations and maximal Salem degree]{Elliptic fibrations on K3 surfaces and Salem numbers of maximal degree}

\author{Xun Yu} 
\address{Center for Applied Mathematics, Tianjin University, 92 Weijin Road, Nankai District,
Tianjin 300072, P. R. China.
}
\email{xunyu@tju.edu.cn}



\begin{document}
\begin{abstract}  
We study the maximal Salem degree of automorphisms of K3 surfaces via elliptic fibrations. By generalizing  \cite{EOY14}, we establish a characterization of such maximum in terms of elliptic fibrations with infinite automorphism groups. As an application, we show that any supersingular K3 surface in odd characteristic has an automorphism the entropy of which is the natural logarithm of a Salem number of degree $22$.
\end{abstract}
\maketitle
\section{Introduction}\label{intro}


Let $X$ be a K3 surface defined over an algebraically closed field $k$ of characteristic $p \ge 0$, that is, $X$ is a smooth projective surface defined over $k$ such that $H^1(X, \sO_X) = 0$ and the dualizing sheaf is trivial : $\omega_X \simeq \sO_X$. We denote by ${\rm NS}(X)$ the N\'eron-Severi group of $X$. 

By definition, the {\it entropy }of an automorphism $f:X\longrightarrow X$ is the logarithm of the maximal absolute value of the eigenvalues of the action $f^*$ on ${\rm NS}(X)$ induced by $f$. This definition is consistent to the topological entropy of automorphisms of smooth complex projective surfaces (\cite{ES13}). One knows that the entropy of any automorphism of $X$ is either $0$ or the logarithm of a Salem number, and we define {\it Salem degree of $f$} to be $0$ or the degree of the Salem number respectively  (see Section~\ref{ss:salem}).  The maximal Salem degree of automorphisms of $X$ is closely related to both complexity and richness of such automorphisms. Thus, a good understanding of such maximum is of interest. The main results of this note are Theorems \ref{thm:main2}, \ref{thm:main3}, and \ref{thm:main1}.

As in \cite{Ni14}, we say an elliptic fibration on $X$ is an elliptic fibration {\it with infinite automorphism group} if the set of all automorphisms of $X$ which preserve this fibration is infinite (see Section~\ref{ss:elliptic}). By generalizing  \cite{EOY14}, we establish a characterization of the maximal Salem degree of automorphisms of K3 surfaces in terms of elliptic fibrations with infinite automorphism groups:

\begin{theorem}\label{thm:main2}
Let $X$ be a K3 surface defined over an algebraically closed field of characteristic $p\ne 2, 3$.  Suppose ${\rm rk}(L_{\infty}(X))\ge 2$, where the sublattice $L_{\infty}(X)\subset {\rm NS}(X)$ is defined to be generated by all the elliptic fibrations with infinite automorphism groups. Let $d={\rm rk}(L_{\infty}(X))$. Then
\begin{itemize}
\item[1)] If $d$ is even,  ${\rm max}\{\text{Salem degree of } f|\; f\in {\rm Aut}(X)\}=d$;

\item[2)] If $d$ is odd,  ${\rm max}\{\text{Salem degree of } f|\; f\in {\rm Aut}(X)\}=d-1$.
\end{itemize}
\end{theorem}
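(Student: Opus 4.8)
The plan is to prove ``$\le$'' and ``$\ge$'' separately; throughout, write $d'$ for the largest even integer with $d'\le d$, so the asserted maximum is $d'$ in both cases.

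\emph{Upper bound.} First I would note that ${\rm Aut}(X)$ preserves $L_\infty(X)$: if $\pi$ is an elliptic fibration with infinite automorphism group and $g\in{\rm Aut}(X)$, then $\pi\circ g^{-1}$ is again such a fibration, its automorphism group being $g\,{\rm Aut}(X,\pi)\,g^{-1}$, so $g^*$ permutes the fibre classes $[\pi]$ generating $L_\infty(X)$ (Section~\ref{ss:elliptic}), and hence also preserves the orthogonal complement $M$ of $L_\infty(X)$ in ${\rm NS}(X)$. Since $d\ge 2$ there are two distinct such fibration classes $e_1\ne e_2$ in $L_\infty(X)$, and $e_1\cdot e_2>0$, so $L_\infty(X)$ is non-degenerate of signature $(1,d-1)$ and $M$ is negative definite. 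The isometry group of a negative definite lattice being finite, ${\rm Aut}(X)\to O(M)$ has finite image, so $f^*|_M$ has finite order for every $f\in{\rm Aut}(X)$; as ${\rm NS}(X)_{\Q}=L_\infty(X)_{\Q}\oplus M_{\Q}$, the Salem factor of the characteristic polynomial of $f^*$ on ${\rm NS}(X)$ divides that of $f^*|_{L_\infty(X)}$, whence Salem degree $\le d$. Finally a Salem polynomial is irreducible and reciprocal with $\pm 1$ not among its roots and its non-real roots in complex-conjugate pairs on the unit circle, hence of even degree; so Salem degree $\le d-1$ when $d$ is odd. Thus the maximum is $\le d'$.

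\emph{Lower bound, reduced to a density statement.} It remains to produce $f\in{\rm Aut}(X)$ of Salem degree exactly $d'$, and I would reduce this to the claim that the image $\Gamma$ of ${\rm Aut}(X)\to O(L_\infty(X))$ is Zariski dense in $O(L_\infty(X)\otimes\R)\cong O(1,d-1)$. Granting that, a genericity argument concludes. Call $g\in O(1,d-1)$ \emph{good} if its characteristic polynomial $P_g$ is separable and divisible by no $\Phi_n$ with $\varphi(n)\le d$ (for $d$ even), respectively divisible by exactly one of $\Phi_1,\Phi_2$ and by no $\Phi_n$ with $n\ge 3$, $\varphi(n)\le d$ (for $d$ odd; recall a reciprocal polynomial of odd degree automatically has $1$ or $-1$ as a root). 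For good $g$ the signature constraint forces $P_g$ to have exactly two roots off the unit circle, and Kronecker's theorem then forces $P_g$ (resp.\ $P_g$ divided by the relevant $\Phi_1$ or $\Phi_2$) to be a Salem polynomial of degree $d'$. ``Goodness'' is a Zariski open condition, and it is non-empty since every even degree is realised by a Salem polynomial, which in turn is the characteristic polynomial of an isometry of a real quadratic space of signature $(1,d-1)$ (use a hyperbolic plane for the two real eigenvalues and negative definite planes for the unit-circle conjugate pairs). A Zariski dense $\Gamma$ meets this open set, and the corresponding $f$ has Salem degree $d'$.

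\emph{Lower bound, the density statement --- the main obstacle.} The heart of the proof, and the content of the generalization of \cite{EOY14}, is the Zariski density of $\Gamma$; this is the step I expect to be hardest, and it is where the hypothesis that $L_\infty(X)$ be generated by \emph{all} elliptic fibrations with infinite automorphism group (and not merely one) is indispensable. Let $H$ be the Zariski closure of $\Gamma$. For any elliptic fibration $\pi$ with infinite automorphism group, ${\rm Aut}(X,\pi)$ acts on $L_\infty(X)$ with infinite image (if $f^*$ had finite order on ${\rm NS}(X)$ a power of $f$ would be numerically trivial, hence $f$ of finite order, as numerically trivial automorphisms form a finite group, while $f^*|_M$ always has finite order); since this image fixes the isotropic class $e=[\pi]$ and acts through a finite group on the negative definite quotient $e^{\perp}/\Z e$ of $L_\infty(X)$, a power of it is a non-trivial unipotent Eichler transvection fixing $e$. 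Hence $H$ contains a non-trivial unipotent, so its identity component is not a torus. The delicate half is to show $H$ acts irreducibly on $L_\infty(X)\otimes\R$: using that $\Gamma$ is discrete in $O(1,d-1)$, a proper non-zero invariant subspace would make $\Gamma$ stabilise a proper rational subspace, which must be excluded by exploiting that $\Gamma$ permutes the fibration classes $[\pi]$ --- transversally, since $[\pi]\cdot[\pi']>0$ for distinct such fibrations --- and that these generate $L_\infty(X)$; this ``mixing'' of the isotropic directions by the several fibrations is exactly the mechanism generalized from \cite{EOY14} and is the principal difficulty. Once irreducibility is in hand, an irreducible subgroup of the rank-one group $O(1,d-1)$ containing a non-trivial unipotent must contain ${\rm SO}(1,d-1)^{\circ}$, so $\Gamma$ is Zariski dense and the lower bound follows.
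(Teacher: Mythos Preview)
Your approach is essentially the paper's. The upper bound is identical, and for the lower bound both arguments reduce to (i) irreducibility of the ${\rm Aut}(X)$-action on $L_\infty(X)_{\R}$ and (ii) a Zariski/genericity argument producing an element whose characteristic polynomial avoids all cyclotomic factors (the paper's Theorem~\ref{thm:BlancCantat} is exactly your ``good element'' step, packaged via the characteristic-polynomial map rather than via full density of $\Gamma$ in $O(1,d-1)$, but the content is the same).

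The one place your sketch falls short of a proof is the irreducibility step. Your remark that discreteness of $\Gamma$ forces a real invariant subspace to be rational is false in general and should be discarded; that is not the mechanism. What actually does the work---and what you are circling around with the ``mixing of isotropic directions''---is made precise in the paper as Theorem~\ref{thm:StableSublattice}: if $g\in{\rm SO}(L)(\Z)$ has infinite order and fixes a primitive isotropic $e$, then \emph{every} $g$-stable $\R$-subspace $V\subset L_{\R}$ either lies in $e^{\perp}$ or contains $e$. This is proved by a short explicit matrix computation after passing to a power of $g$ that is unipotent in a basis $\langle e,w_1,\dots,w_{d-2},u\rangle$ adapted to $e$. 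Once you have this dichotomy, your transversality paragraph becomes a rigorous argument: if $V\neq L_{\R}$, pick $e_0\in\sE_\infty(X)$ with $e_0\notin V$; the dichotomy (applied to the infinite-order $g_{e_0}$ you already constructed) gives $V\subset e_0^{\perp}$; then for any other $e'\in\sE_\infty(X)$ one has $(e_0,e')>0$, so $e'\notin V$, so again $V\subset e'^{\perp}$; hence $V\subset\bigcap_{e}e^{\perp}=0$. Supplying this lemma is the missing ingredient in your proposal.
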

 
  Using Theorem~\ref{thm:main2}, we find the following comparison type theorem:

\begin{theorem}\label{thm:main3}
Let $X$ and $Y$ be two K3 surfaces of the same Picard number defined over two algebraically closed fields $k$ and $k^{\prime}$ (${\rm char}(k),{\rm char}(k^{\prime})\ne 2,3$). Suppose  ${\rm NS}(X)$ is isometric to a sublattice of ${\rm NS}(Y)$, and suppose $Y$ has at least one elliptic fibration with infinite automorphism group. Then $${\rm max}\{\text{Salem degree of } f|\; f\in {\rm Aut}(X)\}\ge {\rm max}\{\text{Salem degree of } f|\; f\in {\rm Aut}(Y)\}.$$
\end{theorem}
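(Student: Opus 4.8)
The plan is to derive the comparison from Theorem~\ref{thm:main2} by comparing the sublattices $L_\infty(X)$ and $L_\infty(Y)$. If $\max\{\text{Salem degree of }f:f\in\mathrm{Aut}(Y)\}=0$ there is nothing to prove, so assume it is positive. I first note that then $\mathrm{rk}(L_\infty(Y))\ge 2$: the sublattice $L_\infty(Y)$ is intrinsic to $Y$, hence $\mathrm{Aut}(Y)$-stable, and it is nonzero by hypothesis; if its rank were $1$ it would equal $\mathbb{Z}F$ for a primitive isotropic class $F$, which is nef (being a fiber class), so $\mathrm{Aut}(Y)$ would fix $F$; but an automorphism fixing a nonzero isotropic class of $\mathrm{NS}(Y)$ cannot act hyperbolically, hence has zero entropy, contradicting positivity of the maximum. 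So Theorem~\ref{thm:main2} applies to $Y$ and gives $\max_Y=2\lfloor d_Y/2\rfloor$ with $d_Y:=\mathrm{rk}(L_\infty(Y))$. Since $n\mapsto 2\lfloor n/2\rfloor$ is non-decreasing, it now suffices to prove $\mathrm{rk}(L_\infty(X))\ge\mathrm{rk}(L_\infty(Y))$: this forces $\mathrm{rk}(L_\infty(X))\ge 2$, so Theorem~\ref{thm:main2} applies to $X$ as well and yields $\max_X=2\lfloor\mathrm{rk}(L_\infty(X))/2\rfloor\ge 2\lfloor d_Y/2\rfloor=\max_Y$.

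For the inequality of ranks, use the hypothesis to view $\mathrm{NS}(X)$ as a sublattice of $\mathrm{NS}(Y)$; since $\rho(X)=\rho(Y)$ the inclusion has finite index and $\mathrm{NS}(X)\otimes\mathbb{Q}=\mathrm{NS}(Y)\otimes\mathbb{Q}$ as quadratic spaces. Every $(-2)$-class of $\mathrm{NS}(X)$ is one of $\mathrm{NS}(Y)$, so the Weyl group $W(X)$ also preserves $\mathrm{NS}(Y)$; composing the embedding with a suitable element of $W(X)$ we may assume the nef cones are nested, $\mathrm{Nef}(Y)\subseteq\mathrm{Nef}(X)$ (a routine adjustment). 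Now choose elliptic fibrations on $Y$ with infinite automorphism group whose fiber classes $F_1,\dots,F_{d_Y}$ form a $\mathbb{Q}$-basis of $L_\infty(Y)$, and let $\widetilde F_i\in\mathrm{NS}(X)$ be the primitive class on the ray $\mathbb{R}_{>0}F_i$. Each $\widetilde F_i$ is a primitive, nef (on $X$), isotropic class, hence --- using $\mathrm{char}\ne 2,3$ to exclude quasi-elliptic pencils --- the fiber class of an elliptic fibration $\pi_i$ on $X$; moreover the $\widetilde F_i$ are linearly independent. So the proof reduces to showing that each $\pi_i$ has infinite automorphism group, which gives $L_\infty(X)\supseteq\langle\widetilde F_1,\dots,\widetilde F_{d_Y}\rangle$ and hence $\mathrm{rk}(L_\infty(X))\ge d_Y$.

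This last step is the crux and where I expect the real work. The guiding principle is that ``having infinite automorphism group'' is a property of the pair (N\'eron--Severi lattice, fiber class) that is monotone under the inclusion $\mathrm{NS}(X)\subseteq\mathrm{NS}(Y)$. The transparent instance is the Mordell--Weil rank: the $(-2)$-curves of $X$ lying in fibers of $\pi_i$ are the effective $(-2)$-classes of $\mathrm{NS}(X)$ orthogonal to $\widetilde F_i$, a subset of the corresponding classes for the $Y$-fibration because $\widetilde F_i^{\perp}\cap\mathrm{NS}(X)\subseteq F_i^{\perp}\cap\mathrm{NS}(Y)$; by the Shioda--Tate formula the Mordell--Weil rank of $\pi_i$ is therefore at least that of the $Y$-fibration, and a positive Mordell--Weil rank already forces the automorphism group to be infinite (translations by a section of infinite order). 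The remaining case, where the $Y$-fibration has finite Mordell--Weil group but infinite automorphism group, belongs to the short explicit list classified in \cite{Ni14} (see also Section~\ref{ss:elliptic}); there one must verify case by case that the defining lattice condition descends to $\pi_i$, and it is precisely for turning ``infinite automorphism group'' into such a lattice-theoretic condition that the Torelli-type theorem in characteristic $\ne 2,3$ is needed. That exceptional analysis, together with the compatibility of nef cones used above, are the delicate points; once they are in place, the rest is bookkeeping with Theorem~\ref{thm:main2}.
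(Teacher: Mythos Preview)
Your overall architecture matches the paper's: dispose of the case $\mathrm{rk}(L_\infty(Y))=1$ (zero entropy on $Y$), reduce via Theorem~\ref{thm:main2} to the inequality $\mathrm{rk}(L_\infty(X))\ge\mathrm{rk}(L_\infty(Y))$, transport a $\Q$-basis $F_1,\dots,F_{d_Y}$ of $L_\infty(Y)$ to primitive nef isotropic classes $\widetilde F_i$ on $X$ via a Weyl-group adjustment, and then argue that each $\widetilde F_i\in\sE_\infty(X)$. The paper makes your ``routine adjustment'' explicit by pushing an ample class of $Y$ into $\mathrm{Amp}(X)$ and then checking, via Riemann--Roch on both surfaces, that nef classes of $Y$ stay nef on $X$; but this is what you sketch.

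The gap is in the crux step. You already isolate the right lattice fact---every $(-2)$-class of ${\rm NS}(X)$ orthogonal to $\widetilde F_i$ is a $(-2)$-class of ${\rm NS}(Y)$ orthogonal to $F_i$---but you then route it through the Shioda--Tate formula, which requires $\pi_i$ to admit a section. Nothing guarantees this: in ${\rm NS}(Y)$ one has $\widetilde F_i=mF_i$ for some integer $m\ge 1$, and a section of $\pi_i$ would pair to $1$ with $\widetilde F_i$, hence to $1/m$ with $F_i$, forcing $m=1$; there is no reason $F_i\in{\rm NS}(X)$. Moreover, for a Jacobian fibration the quantity $\mathrm{rk}(e^\perp)-\mathrm{rk}((e^\perp)^{(2)})$ \emph{equals} the Mordell--Weil rank, so your ``remaining case'' (finite Mordell--Weil group yet infinite automorphism group) is empty there, and your dichotomy never addresses the genuinely non-Jacobian situation on either surface. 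The promised case-by-case verification from \cite{Ni14} is therefore both unfinished and aimed at the wrong target.

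The paper bypasses all of this with Lemma~\ref{lem:einf}. Since $\widetilde F_i^\perp\cap{\rm NS}(X)\subset F_i^\perp\cap{\rm NS}(Y)$ and $\widetilde F_i\in\Z F_i$, one gets an embedding $(\widetilde F_i^\perp)^{(2)}\hookrightarrow(F_i^\perp)^{(2)}$, hence $\mathrm{rk}((\widetilde F_i^\perp)^{(2)})\le\mathrm{rk}((F_i^\perp)^{(2)})$; equal Picard numbers give $\mathrm{rk}(\widetilde F_i^\perp)=\mathrm{rk}(F_i^\perp)$, so the positivity criterion of Lemma~\ref{lem:einf} transfers directly from $F_i$ to $\widetilde F_i$. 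No section, no Shioda--Tate, and no case list are needed---the very $(-2)$-class inclusion you wrote down is already the whole argument.
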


\medskip

     Recall that a supersingular K3 surface is a K3 surface with Picard number $22$, the maximal possible value. In many senses, supersingular K3 surfaces are the most special K3 surfaces. An interesting application of Theorem  \ref{thm:main3} is the following:

\begin{theorem}\label{thm:main1}
Let $p$ be an odd prime. Let $X$ be a supersingular K3 surface defined over an algebraically closed field of characteristic $p$. Then there is an automorphism $f\in {\rm Aut}(X)$ the entropy of which is the logarithm of a Salem number of degree $22$.
\end{theorem}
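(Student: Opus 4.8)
The plan is to deduce Theorem~\ref{thm:main1} from Theorems~\ref{thm:main2} and~\ref{thm:main3} by reducing everything to the most symmetric supersingular K3 surface in each characteristic. Recall that (Artin, Ogus) a supersingular K3 surface in characteristic $p$ has ${\rm NS}\cong\Lambda_{p,\sigma}$ with Artin invariant $\sigma\in\{1,\dots,10\}$, that there is, up to isomorphism, a unique such surface with $\sigma=1$, and that (Rudakov--Shafarevich) these lattices are nested, so $\Lambda_{p,\sigma}$ is isometric to a finite-index sublattice of $\Lambda_{p,1}$ for every $\sigma$. Hence it is enough to produce, for each odd $p$, \emph{one} supersingular K3 surface $Y=Y_p$ of Artin invariant $1$ which carries an elliptic fibration with infinite automorphism group and for which $\max\{\text{Salem degree of }f\mid f\in{\rm Aut}(Y_p)\}=22$: for an arbitrary supersingular K3 surface $X$ in characteristic $p$, ${\rm NS}(X)$ is then isometric to a sublattice of ${\rm NS}(Y_p)$ and both have Picard number $22$, so Theorem~\ref{thm:main3} yields $\max\{\text{Salem degree of }f\mid f\in{\rm Aut}(X)\}\ge 22$. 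Since the minimal polynomial of the Salem number of any automorphism of $X$ divides the characteristic polynomial of its action on the rank-$22$ lattice ${\rm NS}(X)$, this maximum is at most $22$, hence equal to $22$; an automorphism realizing it has entropy equal to the logarithm of a Salem number of degree $22$, which is the assertion.

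For $p\ge 5$ I would take $Y_p={\rm Km}(E\times E)$ for a supersingular elliptic curve $E$ over $\overline{\F}_p$; this is a supersingular K3 surface of Artin invariant $1$ (Shioda, Ogus). The first projection $E\times E\to E$ is equivariant for $-1$, hence descends, after resolving the sixteen nodes of $(E\times E)/\{\pm1\}$, to a jacobian elliptic fibration $\pi\colon Y_p\to\P^1$ with generic fibre $E$ and four fibres of type $I_0^{*}$ (over the images of the $2$-torsion in the first factor). By the Shioda--Tate formula its Mordell--Weil rank equals $22-2-4\cdot4=4$, so the infinitely many translations by sections are automorphisms of $Y_p$ preserving $\pi$; thus $\pi$ is an elliptic fibration with infinite automorphism group and $d:={\rm rk}(L_\infty(Y_p))\ge 2$, so Theorem~\ref{thm:main2} applies. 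As only the first case of that theorem, with $d=22$, outputs the value $22$, we are reduced to showing $d=22$.

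\textbf{The main obstacle is to prove that $L_\infty(Y_p)$ has full rank $22$.} Here is the line of argument I would follow. An automorphism of $Y_p$ carries an elliptic fibration with infinite automorphism group to another such fibration, so ${\rm Aut}(Y_p)$ preserves $L_\infty(Y_p)$ and hence its orthogonal complement $R:=L_\infty(Y_p)^{\perp}\subset{\rm NS}(Y_p)$. Since $L_\infty(Y_p)$ contains two non-proportional nef isotropic classes — the fibre class of $\pi$ and the one attached to the second projection — it has signature $(1,d-1)$, so $R$ is negative definite of rank $22-d$; consequently the image of ${\rm Aut}(Y_p)$ in ${\rm O}(R)$ is finite and a finite-index subgroup of ${\rm Aut}(Y_p)$ fixes $R$ pointwise. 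On the other hand, by Ogus's crystalline Torelli theorem ($p\ge 5$) together with the largeness of the automorphism group of the supersingular Kummer surface ${\rm Km}(E\times E)$, no finite-index subgroup of ${\rm Aut}(Y_p)$ can fix a nonzero class of ${\rm NS}(Y_p)\otimes\Q$; therefore $R=0$, i.e.\ $d=22$. A more computational alternative would be to run through an explicit classification of the elliptic fibrations on ${\rm Km}(E\times E)$ — in the spirit of Kuwata--Shioda, enlarged by the extra endomorphisms of $E$ — and check that the classes of those with positive Mordell--Weil rank already span ${\rm NS}(Y_p)\otimes\Q$, the six ``constant'' fibrations $E\times E\to E$ accounting for the image of ${\rm NS}(E\times E)$ and suitable genuinely Kummer fibrations for the remaining sixteen directions.

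Granting $d=22$, the first case of Theorem~\ref{thm:main2} gives $\max\{\text{Salem degree of }f\mid f\in{\rm Aut}(Y_p)\}=22$, and the reduction in the first paragraph proves Theorem~\ref{thm:main1} for all odd $p\ge 5$. The case $p=3$, excluded from Theorems~\ref{thm:main2} and~\ref{thm:main3}, must be treated on its own; one takes $Y_3$ to be the Artin-invariant-$1$ supersingular K3 surface in characteristic $3$ — the Fermat quartic surface, which is also ${\rm Km}(E\times E)$ for the supersingular $E$ in characteristic $3$ — and argues along the same lines, the elliptic fibrations used being isotrivial so that the characteristic hypotheses are inessential for them, or alternatively exhibits an automorphism of Salem degree $22$ directly from the explicit description of ${\rm Aut}(Y_3)$.
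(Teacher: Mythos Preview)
Your reduction to the Artin-invariant-$1$ surface via Theorem~\ref{thm:main3}, together with the verification that ${\rm Km}(E\times E)$ carries an elliptic fibration with infinite automorphism group, is correct and matches the paper. The difficulty is in the step you flag yourself: establishing that $Y_p$ already has an automorphism of Salem degree $22$ (equivalently, that $d={\rm rk}(L_\infty(Y_p))=22$).

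Your proposed argument for $d=22$ is circular. The statement ``no finite-index subgroup of ${\rm Aut}(Y_p)$ fixes a nonzero class of ${\rm NS}(Y_p)\otimes\Q$'' is exactly the statement $E({\rm NS}(Y_p))=0$ (in the notation of Section~\ref{ss:E}), and by Theorem~\ref{thm:E} this is equivalent to $L_\infty(Y_p)^{\perp}=0$, i.e.\ to $d=22$. Ogus's crystalline Torelli theorem tells you that ${\rm Aut}(Y_p)\to{\rm O}({\rm NS}(Y_p))$ has finite kernel and describes the image, but it does not by itself rule out a nonzero ${\rm Aut}(Y_p)$-fixed sublattice; the ``largeness'' you invoke is precisely what has to be proved. (Contrast Remark~(iii) after Theorem~\ref{thm:evenPicard}: for ${\rm Km}(E\times F)$ with non-isogenous complex elliptic curves, $E({\rm NS})$ has rank $\ge 8$, so the vanishing is a genuinely special feature of the supersingular case, not a formality.) Indeed, the paper obtains $E({\rm NS}(X))=0$ for supersingular $X$ only as a \emph{corollary} of Theorem~\ref{thm:main1}, not as an input to it.

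The paper avoids this issue entirely by importing the Artin-invariant-$1$ case from the literature. For $p=11$ or $p>13$ it uses Theorem~\ref{thm:known}(3) (\cite{EOY14}) to conclude directly that $X(p)$ has an automorphism of Salem degree $22$; the two product fibrations on ${\rm Km}(E\times E)$ are used only to check ${\rm rk}(L_\infty(X(p)))\ge 2$, so that Theorem~\ref{thm:main3} applies and spreads the conclusion to all Artin invariants. For the residual primes $p\in\{3,5,7,13\}$ the paper does not go through Theorem~\ref{thm:main3} at all: Theorem~\ref{thm:known}(2),(4) (\cite{EO15}, \cite{Sh16}) already cover every Artin invariant $1\le\sigma\le 10$ at those primes. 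Your alternative of classifying enough elliptic fibrations on ${\rm Km}(E\times E)$ to span ${\rm NS}\otimes\Q$ is in principle feasible but is a substantial computation you have not carried out; absent that, the argument as written has a gap at the point you identified.
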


In particular, those automorphisms are not geometrically liftable to characteristic $0$ (see \cite{EO15}, \cite{EOY14}).    Many people have studied supersingular K3 surface automorphisms of Salem degree $22$ in recent years  (\cite{BC16}, \cite{EO15}, \cite{EOY14}, \cite{Sh16}, \cite{Sch15}, \cite{Br15}).  More precisely, we summarize previously known results on such automorphisms as follows:

\begin{theorem}\label{thm:known}
 Let $p$ be a prime number. Let $X$ be a supersingular K3 surface defined over an algebraically closed field of characteristic $p$.  Let $\sigma(X)$ be the Artin invariant of $X$. Then, in the following cases, $X$ has an automorphism the entropy of which is the logarithm of a Salem number of degree $22$:
 \begin{itemize}
 
  \item[(1)] $\sigma(X)=1$ and $p=2$ (\cite{BC16});
  
  \item[(2)] $\sigma(X)=1$ and $p=3$  (\cite{EO15});
  
  \item[(3)]  $\sigma(X)=1$ and $p=11$ or $>13$ (\cite{EOY14});
  
  \item[(4)] $\sigma(X)=1$ and $p\in\{5,7,13\}$, or  $2\le \sigma(X)\le 9$ and $3\le p\le 7919$, or $ \sigma(X)=10$ and $3\le p\le 17389$ (\cite{Sh16}).
\end{itemize}
\end{theorem}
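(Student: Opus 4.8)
Theorem~\ref{thm:known} is a survey statement: each of the four items records a theorem already proved in the reference attached to it, so a proof amounts to checking that the four sources, taken together, cover exactly the listed pairs $(p,\sigma(X))$ and then invoking them. Rather than reproduce four different arguments, the plan is to isolate the single mechanism on which all of them rest and to indicate what forces the patchwork of ranges. That mechanism is the crystalline theory of supersingular K3 surfaces: by Rudakov--Shafarevich and Ogus, the N\'eron--Severi lattice of such an $X$ is the standard supersingular K3 lattice $N_{p,\sigma}$ of signature $(1,21)$ with discriminant group $(\Z/p\Z)^{2\sigma}$, depending only on $p$ and $\sigma=\sigma(X)$ (and for $\sigma=1$ the surface itself is unique up to isomorphism), while the crystalline Torelli theorem tells us when an isometry $g$ of ${\rm NS}(X)$ is of the form $f^*$ for some $f\in{\rm Aut}(X)$.

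In this language, producing an automorphism whose entropy is $\log\lambda$ for a Salem number $\lambda$ of degree $22$ is the same as producing a \emph{Salem isometry of degree $22$} of the rank-$22$ lattice ${\rm NS}(X)$ --- an isometry whose characteristic polynomial is itself an irreducible Salem polynomial of degree $22$ --- that is realizable by an automorphism. (The eigenvalue pattern $\lambda,\lambda^{-1}$ together with $20$ eigenvalues on the unit circle matches the signature $(1,21)$, so degree $22$ is the maximal possibility.) I would therefore organize each case into two steps: first, a purely lattice-theoretic step, namely exhibiting an isometry $g$ of $N_{p,\sigma}$ with the prescribed Salem characteristic polynomial and spectral radius $\lambda$; and second, a realizability step, namely modifying $g$ by the Weyl group of $(-2)$-classes so that it preserves an ample chamber of the positive cone and is compatible with the discriminant form and the crystalline/period datum, whereupon Torelli yields the required $f$.

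With this template, the four items are distinguished only by how the two steps are carried out. For $\sigma(X)=1$ the lattice $N_{p,1}$ is the most rigid and its isometries are the best understood, so \cite{EOY14} (for $p=11$ and $p>13$), \cite{EO15} (for $p=3$) and \cite{BC16} (for $p=2$) can construct $g$ essentially by hand, typically by presenting $N_{p,1}$ through an explicit elliptic fibration and combining fiberwise translations with a symmetry of the base into a positive-entropy map. For the much wider ranges of item (4) --- $2\le\sigma(X)\le 9$ with $3\le p\le 7919$, and $\sigma(X)=10$ with $3\le p\le 17389$ --- \cite{Sh16} verifies both steps computationally, lattice by lattice; the large but finite bounds on $p$ are artifacts of that computation rather than of any theoretical barrier. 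The small characteristics $p=2,3$ are isolated because the crystalline Torelli theorem and the geometry of the relevant fibrations genuinely behave differently there.

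The hard part, in every case, is the second step rather than the first: an abstract Salem isometry of degree $22$ of $N_{p,\sigma}$ exists quite generally, but showing that it --- or a Weyl-modification of it --- actually lies in the image of ${\rm Aut}(X)\to O({\rm NS}(X))$, i.e.\ that it fixes an ample chamber and meets the arithmetic constraints imposed by the discriminant form and the period, is where all the real effort goes. It is precisely the difficulty of making this realizability step uniform in $p$ and $\sigma$ that leaves Theorem~\ref{thm:known} in the form of a case-by-case list, and that the later results of the present paper are designed to bypass.
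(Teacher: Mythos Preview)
Your identification of Theorem~\ref{thm:known} as a pure survey statement is correct, and indeed the paper gives no proof of it at all: the theorem is simply a tabulation of results from \cite{BC16}, \cite{EO15}, \cite{EOY14}, \cite{Sh16}, and the ``proof'' consists of the citations themselves. In that sense your proposal already does more than the paper does.

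That said, the unified narrative you offer for the four references is a considerable oversimplification, and in places inaccurate. The two-step template ``construct a Salem isometry of $N_{p,\sigma}$, then use crystalline Torelli to realize it'' is a fair description of Shimada's computational approach in \cite{Sh16}, but it is not how \cite{EOY14} proceeds: there the argument is group-theoretic and non-constructive, showing that a subgroup of ${\rm Aut}(X(p))$ generated by automorphisms coming from two elliptic fibrations acts on ${\rm NS}(X(p))_{\R}$ with no proper invariant subspace, and then invoking a Zariski-density result (the ancestor of Theorem~\ref{thm:BlancCantat} here) to conclude that \emph{some} element of that subgroup has irreducible Salem characteristic polynomial. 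No explicit isometry is written down and no Torelli-type realizability step is needed, since one works inside ${\rm Aut}(X)$ from the start. Similarly, \cite{EO15} for $p=3$ exploits the special geometry of the Fermat quartic rather than a generic lattice-plus-Torelli argument, and \cite{BC16} for $p=2$ uses yet another route. So while your high-level summary is serviceable as motivation, it misrepresents the actual diversity of techniques and in particular the method of \cite{EOY14}, which is precisely the one the present paper generalizes.
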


%


It is known that for any supersingular K3 surface $X$ in odd characteristic, the N\'eron-Severi group ${\rm NS}(X)$ is isometric to a sublattice of the N\'eron-Severi group of a supersingular K3 surface of Artin invariant one in the same characteristic (\cite{RS78}, cf.~\cite{Li15}). Thus, by combining Theorem~\ref{thm:main3} and Theorem~\ref{thm:known}, we can  prove Theorem~\ref{thm:main1}. The author was recently informed that Simon Brandhorst found an alternative proof of Theorems \ref{thm:main3} and \ref{thm:main1}.

In Section~\ref{ss:E}, following \cite{Ni14}, we introduce the notion of the {\it exceptional sublattice} $E({\rm NS}(X))\subset {\rm NS}(X)$. For an elliptic K3 surface $X$, the exceptional sublattice, the set of all the elliptic fibrations with infinitie automorphism groups and the maximal Salem degree of automorphisms of $X$ are closely related to each other (Theorems~\ref{thm:main2}, \ref{thm:E}, \ref{thm:evenPicard}).   It is hoped that the exploration of such relationships in this note will find interesting applications in future study of K3 surfaces and other related topics. 

\medskip

\noindent
{\bf Acknowledgement.}  The author would like to thank Professor Keiji Oguiso for valuable discussions and comments.\\[.2cm]

\section{Preliminaries on K3 surfaces and elliptic fibrations} \label{sec:notations}
\noindent 

In this section, we fix notations on K3 surfaces and elliptic fibrations.

\subsection{K3 surfaces} \label{ss:k3}

Let $X$ be a K3 surface defined over an algebraically closed field $k$ of characteristic $p \ge 0$. We denote by ${\rm NS}(X)$ the N\'eron-Severi group of $X$. Then the Picard group
${\rm Pic}\, (X) $ is isomorphic to  N\'eron-Severi group  ${\rm NS}(X)$, which is a free $\Z$-module of finite rank. The rank of ${\rm NS}(X)$ is called the Picard number of $X$ and is denoted by $\rho(X)$.  It is at least $1$ as $X$ is assumed to be projective, and at most $22$, the second $\ell$-adic Betti number. In characteristic $0$, it is at most $20$ by Hodge theory.
The intersection form $(*, **)$ on ${\rm NS}(X)$ is of signature $(1, \rho(X) -1)$ and $({\rm NS}(X), (*,**))$ is then an even  hyperbolic lattice. The dual $\Z$-module ${\rm NS}(X)^* := {\rm Hom}_{\Z}({\rm NS}(X), \Z)$ is regarded as a $\Z$-submodule of ${\rm NS}(X) \otimes \Q$, containing ${\rm NS}(X)$ through the intersection form $(*, **)$ which is non-degenerate. The quotient module ${\rm NS}(X)^*/{\rm NS}(X)$ is called the discriminant group of $X$.


 Artin \cite{Ar74}  proved that the discriminant group ${\rm NS}(X)^*/{\rm NS}(X)$ of a supersingular K3 surface $X$ is $p$-elementary, more precisely,  as an abelian group
$${\rm NS}(X)^*/{\rm NS}(X) \simeq (\Z/p)^{2\sigma(X)}$$
where $\sigma(X)$ is an integer such that $1 \le \sigma(X) \le 10$. The integer $\sigma(X)$ is called the Artin invariant of $X$. Let $\sigma$ be an integer such that $1 \le \sigma \le 10$. Then the supersingular K3 surfaces over $k$ with   $\sigma(X) \le \sigma$   form $(\sigma -1)$-dimensional family over $k$. 
There is, up to isomorphism, a unique Artin invariant $1$ supersingular K3 surface $X(p)$  in each positive characteristic $p>0$. When $p\ge 3$, the uniqueness of $X(p)$ in particular shows that $X(p) \simeq {\rm Km}\, (E \times_{\F_p} E)$ for any supersingular elliptic curve $E$ over $\F_p$ (\cite{Ogu79}, \cite{Shi75}).

\subsection{Elliptic fibrations on K3 surfaces}  \label{ss:elliptic}

Let $X$ be a K3 surface defined over an algebraically closed field $k$ of characteristic $p\ne 2,3$. According to Piatetsky-Shapiro and Shafarevich \cite{PS71}, elliptic fibrations on $X$ are in one-to-one correspondence with primitive isotropic nef elements $e\in {\rm NS}(X)$. That is, $e\ne 0$, $e^{2}=0$, $e/n\in {\rm NS}(X)$ only for integers $n=\pm 1$, $e\cdot D\ge 0$ for any effective divisor $D$ on $X$. For such $e\in {\rm NS}(X)$, the complete linear system $|e|$ is one dimensional without base points, and it gives an elliptic fbration $|e|: X\longrightarrow \P^{1}$, that is, the general fiber is an elliptic curve.

 For any $c\in {\rm NS}(X)$, we set $${\rm Aut}(X)_{c}:=\{f\in {\rm Aut}(X)| f^{*}(c)=c\}.$$ We introduce some notations related to elliptic fibrations on $X$:

\medskip

$\sE(X):=\{e\in {\rm NS}(X) | e \text{ is primitive, isotropic, and nef }\}$;

$\sE_{\infty}(X):=\{e\in \sE(X) | {\rm Aut}(X)_{e} \text{ is an infinite group}\}$, and we say an elliptic fibration on $X$ is an elliptic fibration {\it with infinite automorphism group} if the corresponding class of this fibration is in $\sE_{\infty}(X)$;


The sublattice $L_{\infty}(X)\subset {\rm NS}(X)$ is generated by all the elements in $\sE_{\infty}(X)$.

\medskip

%

  For any nonzero isotropic element $c\in {\rm NS}(X)$, the sublattice $(c^{\perp})^{(2)}\subset c^{\perp}$ is generated by $c$ and by all elements with square $(-2)$ in $c^{\perp}$, where $c^{\perp}$ is the orthogonal complement to $c$ in ${\rm NS}(X)$. The following lemma gives a test of an elliptic fibration with infinite automorphism group:

\begin{lemma}\label{lem:einf}
Let $e\in \sE(X)$. Then $e\in \sE_{\infty}(X)$  if and only if ${\rm rk}(e^{\perp})-{\rm rk}((e^{\perp})^{(2)})>0$.
\end{lemma}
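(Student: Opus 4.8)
The plan is to identify the integer ${\rm rk}(e^{\perp})-{\rm rk}((e^{\perp})^{(2)})$ with the Mordell--Weil rank of the genus one fibration $\pi=|e|\colon X\longrightarrow\P^{1}$ determined by $e$, and then to use the standard fact that ${\rm Aut}(X)_{e}$ is infinite precisely when that rank is positive. The first point is to show that $(e^{\perp})^{(2)}$ coincides with the sublattice ${\rm Fib}\subset{\rm NS}(X)$ spanned by $e$ and by all irreducible components of the reducible fibres of $\pi$. Every fibre component is orthogonal to $e$, since distinct fibres are disjoint, and it is either equal to $e$ in ${\rm NS}(X)$ (when its fibre is irreducible) or a smooth rational curve of self-intersection $-2$ (when it is a component of a reducible Kodaira fibre); hence ${\rm Fib}\subseteq(e^{\perp})^{(2)}$. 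Conversely, if $v\in e^{\perp}$ satisfies $v^{2}=-2$, then Riemann--Roch forces $v$ or $-v$ to be effective, and writing this effective class as a non-negative combination of irreducible curves, each curve $C$ in its support has $C\cdot e=0$; as $e$ is nef this means $C$ is contracted by $|e|$, i.e.\ $C$ is a fibre component. Therefore $(e^{\perp})^{(2)}\subseteq{\rm Fib}$, and equality holds.

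Next I would count ranks. Since $e\neq0$ we have ${\rm rk}(e^{\perp})=\rho(X)-1$. For a reducible fibre over a point $v$, its $m_{v}$ components satisfy a single relation (their Kodaira weighted sum is $e$) and hence span rank $m_{v}-1$ beyond $\Z e$, while irreducible fibres contribute nothing new beyond $e$ itself; moreover the contributions of distinct fibres are mutually orthogonal modulo $\Z e$. Thus ${\rm rk}((e^{\perp})^{(2)})=1+\sum_{v}(m_{v}-1)$ and
\[
{\rm rk}(e^{\perp})-{\rm rk}((e^{\perp})^{(2)})=\rho(X)-2-\sum_{v}(m_{v}-1).
\]
By the Shioda--Tate formula applied to the Jacobian elliptic surface $J\longrightarrow\P^{1}$ of $\pi$ --- which has the same Kodaira fibre types as $\pi$ and satisfies $\rho(J)=\rho(X)$, because $X$ and $J$ have isometric transcendental lattices (if $\pi$ already admits a section one simply applies Shioda--Tate to $X$ itself) --- the right-hand side equals ${\rm rk}\,{\rm MW}(J)$.

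It then remains to verify that ${\rm Aut}(X)_{e}$ is infinite if and only if ${\rm rk}\,{\rm MW}(J)>0$. For the ``if'' direction, translation by a non-torsion section of $J$ acts on $X$ through its $J$-torsor structure as an automorphism of infinite order preserving each fibre of $\pi$, and hence fixing $e$. For the ``only if'' direction, one presents ${\rm Aut}(X)_{e}$ as an extension $1\to{\rm Aut}(X/\P^{1})\to{\rm Aut}(X)_{e}\to B\to1$ with $B\subset{\rm Aut}(\P^{1})$; the image $B$ is finite because it preserves the configuration of singular fibres of $\pi$, and ${\rm Aut}(X/\P^{1})$ acts on the negative definite lattice $e^{\perp}/\Z e$ with finite image, the kernel of that action being controlled by ${\rm MW}(J)$ up to a finite torsion/fibrewise contribution; so ${\rm rk}\,{\rm MW}(J)=0$ forces ${\rm Aut}(X)_{e}$ to be finite.

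I expect the main obstacle to lie in this last equivalence, especially the ``only if'' direction and the accompanying bookkeeping around the Jacobian: one must check, under the hypothesis ${\rm char}(k)\neq2,3$ (which keeps Kodaira's classification of singular fibres and the Jacobian/Weierstrass formalism available), that $\rho(J)=\rho(X)$ and that all the auxiliary pieces --- the torsion of ${\rm MW}(J)$, the automorphisms of the generic fibre as a torsor, and the base image $B$ --- are genuinely finite. By contrast, the lattice-theoretic core, namely the identification $(e^{\perp})^{(2)}={\rm Fib}$ and the ensuing rank count, is routine.
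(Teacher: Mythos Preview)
Your approach is correct in outline but takes a genuinely different route from the paper's. The paper argues purely lattice-theoretically: by \cite[Theorem~6.1]{LM11} the map ${\rm Aut}(X)\to A({\rm NS}(X))$ (isometries preserving the ample cone) has finite kernel and cokernel, so ${\rm Aut}(X)_{e}$ is infinite if and only if the stabiliser $A({\rm NS}(X))_{e}$ is; and Nikulin's \cite[Corollary~1.5.4]{Ni83}, a statement about arbitrary even hyperbolic lattices, says the latter holds exactly when ${\rm rk}(e^{\perp})-{\rm rk}((e^{\perp})^{(2)})>0$. No Jacobian, no Shioda--Tate, no fibre-component bookkeeping; the geometry enters only through the cone-conjecture input \cite{LM11}, which is already uniform in characteristic $\neq 2$.

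Your geometric route is more informative --- it explains the rank difference as a Mordell--Weil rank and makes the infinite-order automorphisms explicit as translations --- but it trades two clean black boxes for several facts that are genuinely delicate away from characteristic zero. The identity $\rho(J)=\rho(X)$, which you correctly flag as the crux, is standard over $\C$ via Ogg--Shafarevich theory and isometric transcendental lattices, but in positive characteristic it is not a one-liner (one needs, for instance, an argument through flat cohomology or the formal Brauer group to compare $X$ and its Jacobian). Likewise the finiteness of the base image $B\subset{\rm Aut}(\P^{1})$ relies on there being at least three singular fibres, which in turn uses the tame Kodaira classification available for $p\neq 2,3$. None of this is wrong, but it is exactly the ``bookkeeping around the Jacobian'' you anticipate, and it is real work; the paper's lattice-theoretic argument sidesteps all of it. In short: your approach buys geometric transparency at the cost of characteristic-$p$ verifications, while the paper's buys uniformity and brevity at the cost of citing Nikulin as a black box.
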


\begin{proof}
We use ${\rm Amp}(X)\subset {\rm NS}(X)\otimes \R$ to denote the ample cone of $X$. Let $$A({\rm NS}(X)):=\{f\in {\rm O}({\rm NS}(X))| f({\rm Amp}(X))={\rm Amp}(X) \}.$$ Let $A({\rm NS}(X))_{e}\subset A({\rm NS}(X))$ be the stabilizer subgroup of $e$. By \cite[Theorem 6.1]{LM11}, the natural map ${\rm Aut}(X)\longrightarrow A({\rm NS}(X))$ has finite kernel and cokerel. Then ${\rm Aut}(X)_{e}$ and $A({\rm NS}(X))_{e}$ are isomorphic up to finite groups. By \cite[Corollary 1.5.4]{Ni83} (notice that the proof of \cite[Corollary 1.5.4]{Ni83} works for even hyperbolic lattices, in particular, ${\rm NS}(X)$, and is valid in any characteristic), $A({\rm NS}(X))_{e}$ is infinite if and only if ${\rm rk}(e^{\perp})-{\rm rk}((e^{\perp})^{(2)})>0$. Thus, ${\rm Aut}(X)_{e}$ is infinite if and only if ${\rm rk}(e^{\perp})-{\rm rk}((e^{\perp})^{(2)})>0$.

\end{proof}

Let $\phi: {\rm Aut}(X)\longrightarrow {\rm O}({\rm NS}(X))$ be the natural map which sends any $f\in {\rm Aut}(X)$ to the induced isometry $f^{*}$ of ${\rm NS}(X)$. By definition of $L_{\infty}(X)$, it is clear that $L_{\infty}(X)$ is an ${\rm Aut}(X)$-stable sublattice of ${\rm NS}(X)$ (i.e., for all $f\in {\rm Aut}(X)$, $f^{*}(L_{\infty}(X))=L_{\infty}(X)$). Thus $\phi$ naturally induces another map $$\psi: {\rm Aut}(X)\longrightarrow {\rm O}(L_{\infty}(X))\times {\rm O}(L_{\infty}(X)^{\bot})$$ such that, for any $f\in {\rm Aut}(X)$, $\psi(f)=(f^{*}|_{L_{\infty}(X)}, f^{*}|_{L_{\infty}(X)^{\bot}})$. Let $$\pi: {\rm O}(L_{\infty}(X))\times {\rm O}(L_{\infty}(X)^{\bot})\longrightarrow {\rm O}(L_{\infty}(X))$$ be the natural projection map.

\begin{lemma}\label{lem:Kerfinite}
Suppose $X$ has at least two different elliptic fibrations with infinite automorphism groups, i.e., ${\rm rk}(L_{\infty}(X))\ge 2$. Then  both ${\rm Ker}(\psi)$ and ${\rm Ker}(\pi\circ \psi)$ are finite groups.
\end{lemma}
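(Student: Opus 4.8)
The plan is to bootstrap from the finiteness of ${\rm Ker}(\phi)$, where $\phi\colon {\rm Aut}(X)\to {\rm O}({\rm NS}(X))$ is finite by \cite[Theorem 6.1]{LM11} (as already used in the proof of Lemma~\ref{lem:einf}), together with the fact that the orthogonal group of a negative definite lattice is finite. The hypothesis ${\rm rk}(L_\infty(X))\ge 2$ enters precisely in order to guarantee that $L_\infty(X)$ contains a hyperbolic plane, hence is non-degenerate of signature $(1,d-1)$ with negative definite orthogonal complement.

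First I would pin down the structure of $L_\infty(X)$. Since $d\ge 2$ and $L_\infty(X)$ is generated by $\sE_\infty(X)$, there exist linearly independent $e_1,e_2\in\sE_\infty(X)$. Both classes are nef and isotropic, so $e_1\cdot e_2\ge 0$; and $e_1\cdot e_2=0$ is impossible, since it would make $\langle e_1,e_2\rangle\otimes\R$ a $2$-dimensional totally isotropic subspace of the signature $(1,\rho(X)-1)$ quadratic space ${\rm NS}(X)\otimes\R$. Hence, with $m:=e_1\cdot e_2>0$, the sublattice $\langle e_1,e_2\rangle$ has Gram matrix $\bigl(\begin{smallmatrix}0&m\\ m&0\end{smallmatrix}\bigr)$: it is non-degenerate of signature $(1,1)$, and its orthogonal complement in ${\rm NS}(X)$ is negative definite. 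Any element of the radical of $L_\infty(X)$ is isotropic and orthogonal to $\langle e_1,e_2\rangle$, hence lies in that negative definite complement and therefore vanishes; so $L_\infty(X)$ is non-degenerate, and --- already containing a $(1,1)$-plane inside ${\rm NS}(X)$ --- it has signature $(1,d-1)$. Consequently $L_\infty(X)^{\perp}$ is negative definite, ${\rm O}(L_\infty(X)^{\perp})$ is finite, $L_\infty(X)\cap L_\infty(X)^{\perp}=0$, and $L_\infty(X)\oplus L_\infty(X)^{\perp}$ has finite index in ${\rm NS}(X)$.

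The remainder is formal. If $f\in{\rm Ker}(\psi)$, then $f^{*}$ restricts to the identity on $L_\infty(X)\oplus L_\infty(X)^{\perp}$; since this sublattice has finite index and ${\rm NS}(X)$ is torsion free, $f^{*}={\rm id}$ on all of ${\rm NS}(X)$, i.e.\ $f\in{\rm Ker}(\phi)$, so ${\rm Ker}(\psi)$ is finite. Next, every $f\in{\rm Aut}(X)$ preserves $L_\infty(X)$ and hence $L_\infty(X)^{\perp}$, so $f\mapsto f^{*}|_{L_\infty(X)^{\perp}}$ is a group homomorphism ${\rm Ker}(\pi\circ\psi)\to {\rm O}(L_\infty(X)^{\perp})$ whose kernel is exactly ${\rm Ker}(\psi)$; therefore ${\rm Ker}(\pi\circ\psi)$ is an extension of a subgroup of the finite group ${\rm O}(L_\infty(X)^{\perp})$ by the finite group ${\rm Ker}(\psi)$, and so is finite. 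The only input that is not pure lattice bookkeeping is the strict inequality $e_1\cdot e_2>0$ --- the statement that two distinct elliptic fibrations with infinite automorphism group have classes of positive intersection number --- which is what forces $L_\infty(X)$ to carry the positive direction of the signature and makes its orthogonal complement definite; this is the step I would treat most carefully, though it is brief.
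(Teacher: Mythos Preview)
Your proof is correct and follows essentially the same route as the paper's: reduce to the finiteness of ${\rm Ker}(\phi)$ via \cite[Theorem~6.1]{LM11}, use that $L_\infty(X)\oplus L_\infty(X)^\perp$ has finite index in ${\rm NS}(X)$ to identify ${\rm Ker}(\psi)$ with ${\rm Ker}(\phi)$, and then bound ${\rm Ker}(\pi\circ\psi)/{\rm Ker}(\psi)$ by the finite group ${\rm O}(L_\infty(X)^\perp)$. The only difference is that the paper simply asserts ``since $d\ge 2$, $L_\infty(X)$ is a hyperbolic lattice of signature $(1,d-1)$,'' whereas you actually supply the justification (via $e_1\cdot e_2>0$ and the impossibility of a $2$-dimensional totally isotropic subspace in signature $(1,\rho-1)$); this is a welcome addition rather than a departure in strategy.
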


\begin{proof}

Let $d={\rm rk}(L_{\infty}(X))$. Since $d\ge 2$, it follows that $L_{\infty}(X)$ is a hyperbolic lattice of signature $(1, d-1)$. Then the orthogonal complement $L_{\infty}(X)^{\bot}\subset {\rm NS}(X)$ of $L_{\infty}(X)$ is a negative definite lattice of rank $\rho(X)-d$. By \cite[Theorem 6.1]{LM11}, ${\rm Ker}(\phi)$ is a finite group. Note that $L_{\infty}(X)\oplus L_{\infty}(X)^{\bot}$ is a sublattice of ${\rm NS}(X)$ of finite index. Thus, ${\rm Ker}(\psi)$ is equal to ${\rm Ker}(\phi)$ and hence is also a finite group. The group ${\rm O}(L_{\infty}(X)^{\bot})$ is finite because $L_{\infty}(X)^{\bot}$ is negative definite. Since the quotient ${\rm Ker}(\pi\circ \psi)/{\rm Ker}(\psi)$ is isomorphic to a subgroup of ${\rm O}(L_{\infty}(X)^{\bot})$, it follows that ${\rm Ker}(\pi\circ \psi)/{\rm Ker}(\psi)$ is finite. Then ${\rm Ker}(\pi\circ \psi)$ is also finite.
\end{proof}

\section{Preliminaries on Salem numbers and entropy}  \label{ss:salem}
\label{sec:notations2}
\noindent 
In this section,   we
  recall the definition of entropy and Salem numbers from \cite{EOY14} and  the references therein. 

\medskip

In what follows, $L = (\Z^{1+t} , (*,**))$  is a hyperbolic lattice, i.e., a pair  consisting of a free $\Z$-module of rank $1+t$ and a $\Z$-valued symmetric bilinear form $( \ , \ )$ of $\Z^{1+t}$ of signature 
$(1, t)$ with $t > 0$. For any ring $K$, one denotes the scalar extension of $L$ to $K$ by $L_K$. We fix some notations as follows:

 ${\rm O}(L):=$ the orthogonal group  of the quadratic lattice $L$;
 
 ${\rm SO}(L):=\{g\in {\rm O}(L) | {\rm det}(g)=1 \}$;

$P:=\{x \in L_{\R} |  \ (x^2) > 0\}$, notice that $P$ consists of two connected components, say $\pm C$;

${\rm O}^{+}(L_{\R}) := \{ g \in {\rm O}(L)(\R) | \ g(C) = C \}$;

${\rm O}^{+}(L) := {\rm O}(L) (\Z) \cap {\rm O}^+(L_{\R})$;

${\rm SO}^{+}(L) := {\rm O}(L) (\Z) \cap {\rm O}^+(L_{\R}) \cap {\rm SO}(L)(\R).$

\begin{definition}\label{def:Salem}
We call a polynomial $P(x) \in \Z [x]$ a {\it Salem polynomial} if it is irreducible, monic,  of even degree $2d \ge 2$ and the complex zeroes of $P(x)$ are of the form ($1 \le i \le d-1$):
$$a > 1\,\, ,\,\, 0 < a^{-1} < 1\,\, ,\,\, \alpha_i, \overline{\alpha}_i \in S^1 := 
\{z \in \C\, \vert\, \vert z \vert = 1\} \setminus \{\pm 1\}\,\, .$$ A {\it Salem number} is the real root $>1$ of a Salem polynomial.

\end{definition}

\begin{proposition}\label{salem} Let $f \in {\rm O}^+(L)$. Then, the characteristic polynomial of $f$ is the product of cyclotomic polynomials and at most one Salem polynomial counted with multiplicities. 
\end{proposition}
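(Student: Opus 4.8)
The plan is to study the complex eigenvalues of $f$, use the hyperbolic signature $(1,t)$ to pin down those of absolute value $\ne 1$, and then read off the factorization of $\chi(x):=\det(xI-f)\in\Z[x]$ with Kronecker's theorem. Two preliminary observations set the stage. First, $f$ and $f^{-1}$ are conjugate (via the Gram matrix of $L$), so they have the same characteristic polynomial; since $\chi$ is monic of degree $1+t$, this forces $\chi(x)=\pm x^{1+t}\chi(1/x)$, hence the multiset of roots of $\chi$ is invariant under $\lambda\mapsto\lambda^{-1}$, and $0$ is not a root (as $\chi(0)=\pm\det f=\pm1$). Second, for an isometry one has $(g(f)v,w)=(v,g(f^{-1})w)$ for every $g\in\Z[x]$; applying this to $g=(x-\lambda)^{N}$ shows that two generalized eigenspaces $V_{\lambda},V_{\mu}\subset L_{\C}$ are orthogonal whenever $\lambda\mu\ne1$.

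Next I would bound the eigenvalues of absolute value $>1$. Put $W:=\bigoplus_{|\lambda|>1}V_{\lambda}\subset L_{\C}$. By the orthogonality just noted $W$ is totally isotropic, and since $\{\lambda:|\lambda|>1\}$ is stable under complex conjugation, $W$ is defined over $\R$, hence the complexification of a totally isotropic subspace of the real quadratic space $L_{\R}$ of signature $(1,t)$ with $t>0$, whose Witt index is $1$. Therefore $\dim_{\C}W\le1$: $f$ has at most one eigenvalue of absolute value $>1$, and if it exists it is a simple root of $\chi$ and, by uniqueness, is fixed by complex conjugation, so real. Because $f\in{\rm O}^{+}(L)$ preserves the closed cone $\overline{C}$, this eigenvalue cannot be $<-1$: its eigenvector would be isotropic (as $\lambda^{2}\ne1$) and, after a sign change, lie in $\overline{C}$, whereas $f$ would carry it into $-\overline{C}$, forcing it to vanish. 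Hence either the spectral radius of $f$ equals $1$, or it is a real number $a>1$ which is a simple root of $\chi$; applying the same bound to $f^{-1}$ shows that in the latter case $a$ and $a^{-1}$ are the only roots of $\chi$ of absolute value $\ne1$.

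Finally I would factor $\chi=\prod_{i}Q_{i}^{m_{i}}$ with the $Q_{i}$ monic, irreducible, and distinct in $\Z[x]$; each $Q_i\ne x$. If all roots of $Q_{i}$ have absolute value $\le1$, then by Kronecker's theorem together with irreducibility $Q_{i}$ is cyclotomic; otherwise $Q_{i}$ has $a$ as a root, so it equals the minimal polynomial $Q_{1}$ of $a$ and $m_{i}=1$, and there is at most one such index. It remains to check $Q_{1}$ is a Salem polynomial. The involution $Q\mapsto\widetilde{Q}$, $\widetilde{Q}(x):=x^{\deg Q}Q(1/x)/Q(0)$, permutes the monic irreducible factors of $\chi$ preserving multiplicities, so $\widetilde{Q_{1}}$ is again a factor of multiplicity one. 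If $\widetilde{Q_{1}}\ne Q_{1}$, then $\widetilde{Q_{1}}$ is a distinct irreducible factor of $\chi$ with $a^{-1}$ among its roots and all its other roots of absolute value $\ge1$ (they are reciprocals of roots of $Q_1$ other than $a$) and hence, being eigenvalues $\ne a$, of absolute value exactly $1$; but then $|\widetilde{Q_{1}}(0)|=a^{-1}<1$, impossible for a monic integer polynomial with nonzero constant term. Thus $Q_{1}=\widetilde{Q_{1}}$, its roots are closed under $\lambda\mapsto\lambda^{-1}$, and since $a,a^{-1}$ are the only roots off the unit circle and $\pm1$ cannot be roots of the irreducible $Q_{1}$ (which then has degree $\ge2$), the remaining roots are non-real, of absolute value $1$, occurring in conjugate pairs $\alpha_{i},\overline{\alpha_{i}}$; hence $\deg Q_{1}=2d$ and $Q_{1}$ has exactly the shape of Definition~\ref{def:Salem}. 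Combined with the spectral-radius-one case, $\chi$ is a product of cyclotomic polynomials and at most one Salem polynomial, the latter appearing once.

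The step I expect to be the crux is the middle one: extracting from the signature $(1,t)$ that there is \emph{at most one} eigenvalue of absolute value $>1$, that it is real and a \emph{simple} root of $\chi$, and then that its minimal polynomial is itself reciprocal (which is precisely what rules out the split case $\widetilde{Q_{1}}\ne Q_{1}$). The other ingredients — Kronecker's theorem, the reciprocity of $\chi$ up to sign, and the cone argument excluding a negative dominant eigenvalue — are routine.
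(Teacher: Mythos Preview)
Your argument is correct and complete. The paper itself does not give a proof of this proposition at all: it simply records that the result is well known and points to \cite{Mc02} and \cite{Og10}. So there is no ``paper's own proof'' to compare against beyond those citations.

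That said, what you have written is essentially the standard argument one finds in those references: the orthogonality of generalized eigenspaces $V_\lambda\perp V_\mu$ for $\lambda\mu\ne1$, the Witt-index bound coming from signature $(1,t)$ to force at most one eigenvalue of modulus $>1$ (real and simple), the cone-preservation to exclude a negative dominant eigenvalue, Kronecker's theorem for the remaining factors, and reciprocity of the minimal polynomial of $a$ to pin down the Salem shape. Two small remarks worth tightening in a final write-up: (i) the claim that $\widetilde{Q}$ permutes the monic irreducible \emph{integer} factors of $\chi$ tacitly uses $Q_i(0)=\pm1$ for each $i$, which you should state explicitly (it follows from $\prod_i Q_i(0)^{m_i}=\chi(0)=\pm1$ with all $Q_i(0)\in\Z$); (ii) the phrase ``$f$ and $f^{-1}$ are conjugate via the Gram matrix'' literally gives $f^{-1}$ conjugate to $f^{T}$, which suffices since $f$ and $f^{T}$ share a characteristic polynomial. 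Neither point affects the validity of your argument.
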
    
\begin{proof} As mentioned in \cite[Prop.~3.1]{EO15}, this is well-known. See 
\cite{Mc02}, \cite{Og10}.
\end{proof} 
\begin{definition}\label{entropy}  \begin{itemize}
\item[i)]
 For $f$ as in Proposition~\ref{salem}, we define the entropy 
$h(f)$ 
of $f$ by
$$h(f) = \log ( r(f) ) \ge 0\,\, ,$$
where $r(f)$ is the spectral radius of $f$, that is the maximum of the absolute values of the  complex
 eigenvalues  of $f$ acting on $L$. 
\item[ii)]
For a smooth projective surface $S$ and an automorphism $f$ on it,  one defines the entropy   $h(f)$  by
$$h(f) = \log r(f^* | {\rm NS}(X)) $$ 
where $f^*$ is the action on ${\rm NS}(X)$ induced by $f$. 
 \end{itemize}
\end{definition}
This definition is consistent to the topological entropy of automorphisms of smooth complex projective surfaces (\cite{ES13}).

\begin{definition}\label{def:Salemdegree}
For a K3 surface $X$ and an automorphism $f$ on it,  one defines the {\it Salem degree} of $f$ by

$\text{Salem degree of } f = \begin{cases} 0 &\mbox{if } \text{the entropy } h(f) \text{ is } 0, \\ 
d & \mbox{if } \text{the entropy } h(f) \text{ is log of a Salem number of degree } d. \end{cases} $

\end{definition}

\medskip

\begin{remark}
In order to study the maximal Salem degree of automorphisms of any K3 surface $X$, we will take $L$ to be the sublattice $L_{\infty}(X)$ of the N\'eron-Severi group ${\rm NS}(X)$ and $C$ to be  the connected component of $P$ the closure of which contains all elements in $\sE_{\infty}(X)$. 

\end{remark}

\section{Two observations from group theory} 
\label{sec:grouptheory}
\noindent
In this section, we shall prove  Theorem~\ref{thm:BlancCantat} and Theorem~\ref{thm:StableSublattice} which are  generalizations of \cite[Theorem 4.1]{EOY14} and \cite[Theorem 4.6]{EOY14}.


\begin{thm} \label{thm:BlancCantat}
Let $L$ be a hyperbolic lattice of rank $d\geq2$ and $G \subset {\rm SO}^{+}(L)$ be  a subgroup. Assume that $G$ has no $G$-stable $\R$-linear subspace of $L_{\R}$ other than $\{0\}$ and $L_{\R}$. Then 
\begin{itemize}
\item[i)] If $d$ is even, then there is  an element $g \in G$ such that the  characteristic polynomial of $g$  is a Salem polynomial of degree $d$;

\item[ii)] If $d$ is odd, then there is  an element $g \in G$ such that the  characteristic polynomial of $g$  is $(t-1)s(t)$, where $s(t)$ is a Salem polynomial of degree $d-1$.

\end{itemize}
\end{thm}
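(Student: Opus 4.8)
The plan is to combine a theorem of Blanc--Cantat (or its linear-algebraic substitute) with a pigeonhole/irreducibility argument on the characteristic polynomial, exactly in the spirit of \cite[Theorem 4.1]{EOY14}, but stated for an abstract hyperbolic lattice $L$ rather than $\mathrm{NS}(X)$. First, I would invoke the hypothesis that $G \subset \mathrm{SO}^+(L)$ acts $\R$-irreducibly on $L_\R$ to produce an element $g \in G$ of infinite order whose spectral radius $r(g)$ is $>1$: if every element of $G$ had finite order, then $G$ would be a torsion subgroup of a linear group over $\Q$ and hence (by Burnside/Schur) finite, and a finite subgroup of $\mathrm{O}^+(L_\R)$ fixes a vector in the positive cone $C$, hence fixes a line, contradicting irreducibility. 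So some $g\in G$ has infinite order, and since $g$ preserves the hyperbolic form and the cone $C$, Proposition~\ref{salem} applies: the characteristic polynomial of $g$ is a product of cyclotomic polynomials and at most one Salem polynomial; infinite order together with no eigenvalue structure forcing finiteness gives $r(g)>1$, so a genuine Salem factor $s_g(t)$ of some degree $2e_g \le d$ occurs.

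Next, among all $g \in G$ with $r(g)>1$, I would choose one whose Salem factor has \emph{maximal} degree $2e$. The key step is to show $2e = d$ when $d$ is even and $2e = d-1$ when $d$ is odd. Suppose not, so $2e \le d-2$ (even case) or $2e \le d-3$, hence $\le d-1$ after parity (odd case). The eigenspace decomposition of $g$ over $\C$ splits $L_\C$ into the $2e$-dimensional ``Salem part'' $V_S$ (spanned by eigenvectors for the Salem conjugates) and a complementary part $V_0$ on which all eigenvalues are roots of unity; $V_S$ is defined over $\Q$ and is non-degenerate of signature $(1, 2e-1)$, so $V_S^\perp \cap L_\R$ is a non-trivial negative-definite (or, in the odd leftover case, possibly one extra dimension) $g$-invariant subspace of dimension $d - 2e \ge 1$. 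Because $G$ acts irreducibly, there is $h \in G$ not preserving $V_S$. Then I would run a Blanc--Cantat-type argument: replacing $g$ by a high power $g^N$ and considering $g^N h$ (or a word $g^N h g^{-N}$), the dominant eigenvalue $a^N$ of $g^N$ forces, via a perturbation/continuity estimate on characteristic polynomials, the element $g^N h$ to have spectral radius close to $a^N$ and, crucially, to have a Salem factor of degree \emph{strictly larger} than $2e$ — because $h$ mixes $V_S$ with part of $V_S^\perp$, the new Salem polynomial must ``capture'' at least one more conjugate pair (or, in the odd case, exactly reach $d-1$). This contradicts maximality, so $2e$ is forced to be $d$ (even) or $d-1$ (odd).

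Finally, in the odd case one must check that the cofactor of the degree-$(d-1)$ Salem polynomial in the degree-$d$ characteristic polynomial is exactly $(t-1)$: the cofactor is a monic integer polynomial of degree $1$, so it is $t - \lambda$ with $\lambda \in \Z$, and since $g \in \mathrm{SO}^+(L)$ preserves the cone $C$ and has determinant $1$ with all other eigenvalues of absolute value $1$ or the Salem pair $a, a^{-1}$ (product $1$), the remaining eigenvalue is $\pm 1$; the sign is pinned down to $+1$ by the $\mathrm{O}^+$ condition (a fixed vector can be taken in $\overline{C}$), giving characteristic polynomial $(t-1)s(t)$ as claimed.

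The main obstacle I anticipate is making the perturbation step fully rigorous: one needs a quantitative statement that for a fixed $h$ and large $N$, the element $g^N h$ has a Salem factor of degree $> 2e$, which requires controlling both that $r(g^N h) > 1$ (so a Salem factor exists at all, via Proposition~\ref{salem}) and that this factor is genuinely longer than $2e$ rather than the same length with a different Salem number. This is precisely where the irreducibility of the $G$-action is used — it guarantees an $h$ that does not stabilize the invariant subspace $V_S^\perp$, so that no single proper $g$-invariant rational subspace can absorb the whole $G$-orbit — and where I would follow the Blanc--Cantat eigenvalue-comparison technique adapted to lattices, exactly as \cite{EOY14} did in the geometric setting.
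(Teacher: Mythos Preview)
Your approach is genuinely different from the paper's, and the obstacle you flag at the end is a real gap, not a technicality.

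The paper does \emph{not} run a perturbation argument on $g^N h$. Instead it argues via Zariski topology, following \cite[Lemma~4.5]{EOY14}: the map $\mathrm{char}\colon \mathrm{SO}(L)\to P_d$, $h\mapsto \det(tI-h)$, is a morphism of affine varieties, and the locus $Q_d\subset P_d$ of monic degree-$d$ polynomials divisible by some cyclotomic polynomial (or, in the odd case, by $(t-1)^2$) is a proper Zariski-closed subset. The hypothesis that $G$ has no proper $G$-stable $\R$-subspace forces the Zariski closure of $G$ in $\mathrm{SO}(L)_\R$ to be the whole group (an irreducible subgroup of an orthogonal group of a form of signature $(1,d-1)$ cannot sit inside a proper algebraic subgroup). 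Hence $G$ is Zariski dense, so it cannot lie in the proper closed set $\mathrm{char}^{-1}(Q_d)$; any $g\in G$ outside this set has the required characteristic polynomial. This bypasses entirely the question of how the Salem degree of $g^N h$ compares to that of $g$.

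Your proposed mechanism --- that multiplying by an $h$ which mixes $V_S$ and $V_S^\perp$ forces the Salem factor of $g^N h$ to be strictly longer --- is not justified and I do not see how to make it work. Perturbation arguments of Blanc--Cantat type control the \emph{spectral radius} of $g^N h$ (it is close to $a^N$), but say nothing directly about the \emph{degree} of the irreducible factor carrying that eigenvalue. It is entirely consistent with such estimates that $g^N h$ again has a Salem factor of degree $2e$ (just a different Salem number) together with a cyclotomic cofactor of degree $d-2e$. To rule this out you would effectively need to show that the set of elements with Salem degree $\le 2e$ is ``thin'' in $G$, which is exactly what the Zariski-closure argument delivers for free and what the dynamical approach does not.

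A smaller point: in your last paragraph, the justification that the leftover linear factor is $t-1$ rather than $t+1$ is not that ``a fixed vector can be taken in $\overline{C}$'' --- the $1$-eigenvector for that factor lies in the negative-definite part $V_S^\perp$, not in the cone. The correct reason (which the paper uses implicitly) is that $\det g = 1$ and the Salem factor of even degree has constant term $1$, so the remaining eigenvalue is forced to be $+1$; equivalently, for any $g\in\mathrm{SO}(L)(\Z)$ with $d$ odd, the eigenvalue $1$ always occurs.
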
 
The case i) is just \cite[Theorem 5.1]{EOY14}. The proof of the case ii)  is completely similar to that of the case i), and we will sketch it below:
%

\begin{proof} 

%

%
%
%

We assume $d$ is odd. Let $P_{d} \subset \Z [t]$ be the set of monic polynomials of degree $d$. Then $P_{d}$  is identified 
with  the affine variety $\A^{d}$  defined over $\Z$.
The map  
$${\rm char} : {\rm SO}(L) \to P_{d}\,\, ,\,\, h \mapsto \Phi_{h}(t) := {\rm det} (tI_{d} -h)\,\, $$
is a morphism of affine varieties. 
Let $u_1(t) = t-1, u_2(t) := t+1, \cdots, u_{N}(t)\,\, $  be the cyclotomic polynomials in $\Z[t]$ of degree $\le d$, 
where $N$ is the cardinarity of the cyclotomic polynomials of degree $\le d$. 
The subsets
$P_1 := \{p(t) \in P_{d}(\C)\, | \  u_1^2(t) | p(t) \} \text{ and } P_i := \{p(t) \in P_{d}(\C)\, | \  u_i(t) | p(t) \} \text{ for }i\ge 2$
define
 proper closed algebraic subvarieties of $P_{d}\otimes_{\Z} \Q$, thus so is their finite union 
$Q_{d} := \cup_{i=1}^{N} P_i \subset P_{d}  \otimes_{\Z} \Q   .$ ({\it Notice that the sets $P_i$, $i\ge 2$, defined here is the same  as those defined in \cite{EOY14}. However, the set $P_1$ here is slightly different from that in \cite{EOY14}. This is because, when $d$ is odd, for any $g\in {\rm SO}(L)(\Z)$, $1$ is an eigenvalue of $g$, and hence $t-1$ divides the characteristic polynomial of $g$.})

Let $g \in G$. Its characteristic polynomial $ \Phi_g(t) \in \Z[t]$  is monic and of degree $d$. By Proposition~\ref{salem}, $\Phi_g(t)$ is the product of cyclotomic polynomials and of at most one Salem polynomial counted with multiplicities. Thus, 
$\Phi_g(t)$ is divided by a Salem polynomial of degree $d-1$ if and only if  $ \Phi_g(t) \in P_{d}(\C)\setminus  Q_{d}$. 
The following lemma completes the proof:
\begin{lemma} \label{lem:complete}
There is an element $g \in G$ such that $\Phi_g(t)  \in P_{d}(\C)\setminus Q_{d}$. 
\end{lemma}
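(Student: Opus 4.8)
The goal is to exhibit a single $g \in G$ whose characteristic polynomial avoids the bad locus $Q_d$; combined with Proposition~\ref{salem} and the parity argument already set up, this forces $\Phi_g(t) = (t-1)s(t)$ with $s(t)$ a Salem polynomial of degree $d-1$. Since $Q_d = \bigcup_{i=1}^N P_i$ is a finite union of proper closed subvarieties of $P_d \otimes_{\Z} \Q$, it suffices, for each $i$, to produce \emph{some} element $g_i \in G$ with $\Phi_{g_i}(t) \notin P_i$, and then invoke the irreducibility of the variety parametrizing tuples: because the Zariski closure $\overline{G}$ of $G$ in $\mathrm{SO}(L)$ is irreducible (this is where the absence of nontrivial $G$-stable subspaces enters), the composite morphism $\mathrm{char} \colon \overline{G} \to P_d$ has image not contained in any single $P_i$, hence not in $Q_d$, so a Zariski-dense set of $g \in \overline{G}(\C)$ — and in particular, since $G$ is Zariski-dense, some honest $g \in G$ — has $\Phi_g(t) \notin Q_d$. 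This is precisely the architecture of \cite[Theorem 5.1]{EOY14}, and the only genuine modification is handling $P_1$: with $d$ odd, every $g \in \mathrm{SO}(L)(\R)$ automatically has $1$ as an eigenvalue, so $t-1$ always divides $\Phi_g(t)$, and the relevant bad condition is that $(t-1)^2 \mid \Phi_g(t)$, i.e.\ that $g$ fixes a subspace of dimension $\ge 2$ pointwise or has a Jordan block — equivalently, that $\dim \ker(g - I) \ge 2$ or $\mathrm{rk}(g-I) < d-1$ in a suitable sense.

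First I would set up the irreducibility of $\overline{G}$: since $G \subset \mathrm{SO}^+(L)$ has no $G$-stable real subspace other than $\{0\}$ and $L_\R$, the connected algebraic group $\overline{G}^{\circ}$ acts irreducibly on $L_\R$ as well, and an algebraic subgroup of $\mathrm{SO}(L)$ acting irreducibly on the standard representation is connected (a reductive group acting irreducibly cannot be disconnected without the identity component already being irreducible by a dimension/semisimplicity argument) — this is exactly the reasoning in \cite{EOY14}, and I would simply cite it. Next, for each cyclotomic $u_i$ of degree $\le d$, $i \ge 2$, I must find $g_i \in G$ with $u_i(t) \nmid \Phi_{g_i}(t)$: if no such element existed, then $u_i(t) \mid \Phi_g(t)$ for all $g \in \overline{G}$, which would force a nontrivial $\overline{G}$-stable (hence $G$-stable) subspace — the sum of root spaces for the roots of $u_i$ — contradicting irreducibility (this subspace is proper since $\deg u_i \le d$ and, when $\deg u_i = d$, one checks $u_i \ne \Phi_{\mathrm{id}}$ so it cannot account for the whole characteristic polynomial of every element; in the odd case $d$ is odd so no $u_i$ of even degree equals $d$, and $u_1^{d}$ is handled separately). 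Finally, for $P_1$: I need $g_1 \in G$ with $(t-1)^2 \nmid \Phi_{g_1}(t)$, i.e.\ $\dim_\C \ker(g_1 - I) = 1$. If every element of $G$ had $\dim \ker(g - I) \ge 2$, then $\bigcap_{g \in G} \ker(g-I)$ — no, rather the generic-rank argument: the function $g \mapsto \mathrm{rk}(g - I)$ is lower semicontinuous and constant on a dense open subset of $\overline{G}$; if that generic value were $\le d-2$, then every $g$ fixes the (generically $\ge 2$-dimensional) space $\ker(g-I)$... the cleanest route is: the generic element $g$ of the irreducible group $\overline{G}$ is regular semisimple in $\mathrm{SO}(L)$ (regular elements form a dense open subset of any connected reductive group), and a regular semisimple element of $\mathrm{SO}_d$ with $d$ odd has $1$ as an eigenvalue of multiplicity exactly $1$; hence the generic $g \in \overline{G}$, and a fortiori some $g \in G$, satisfies $(t-1)^2 \nmid \Phi_g(t)$.

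Combining: the set of $g \in \overline{G}$ with $\Phi_g(t) \in P_d(\C) \setminus Q_d$ is the intersection of finitely many nonempty Zariski-open subsets of the irreducible variety $\overline{G}$, hence nonempty and Zariski-dense; since $G$ is Zariski-dense in $\overline{G}$, there is $g \in G$ with $\Phi_g(t) \notin Q_d$, proving Lemma~\ref{lem:complete} and thereby case ii) of Theorem~\ref{thm:BlancCantat}.

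\textbf{Main obstacle.} The substantive point is not the density/irreducibility machinery — that is imported wholesale from \cite[Theorem 5.1]{EOY14} — but the verification, for the modified set $P_1$, that genericity in $\overline{G}$ really does rule out $(t-1)^2 \mid \Phi_g(t)$. One must be careful that $\overline{G}$, though connected and irreducible on $L_\R$, could conceivably be small (e.g.\ a torus or a proper semisimple subgroup of $\mathrm{SO}(L)$), so "the generic element is regular semisimple in $\mathrm{SO}(L)$" needs to be replaced by "the generic element of $\overline{G}$ acts on $L_\R$ with $1$-eigenspace of dimension exactly $1$." The correct statement is: if $\overline{G}$ acts irreducibly on $L_\R$ (dimension $d$ odd), then the $1$-eigenspace of a generic element of $\overline{G}^{\circ}$ is exactly $1$-dimensional. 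I would prove this by contradiction — if the generic $1$-eigenspace had dimension $\ge 2$, by semicontinuity \emph{every} element of $\overline{G}^{\circ}$ would have $1$-eigenspace of dimension $\ge 2$, and then (since $\overline{G}^{\circ}$ is generated by its one-parameter subgroups, or by a Lie-algebra argument) one extracts a genuinely $\overline{G}$-stable subspace of dimension $\ge 1$ and $\le d-1$, contradicting irreducibility — this step, not present in the even case of \cite{EOY14}, is the only place where real care is required, and it is exactly the remark quoted in the excerpt about $t-1$ dividing every characteristic polynomial when $d$ is odd.
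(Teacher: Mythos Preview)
The paper's proof of this lemma is nothing more than a citation of \cite[Lemma~4.5]{EOY14}, so the relevant comparison is between your reconstruction and the argument in \cite{EOY14}. Your overall architecture---pass to the Zariski closure $\overline{G} \subset {\rm SO}(L)$, use its irreducibility, and show ${\rm char}(\overline{G}) \not\subset P_i$ for each $i$---matches that source. For $i \ge 2$ there is in fact nothing to do (the identity ${\rm id} \in G$ already has $\Phi_{{\rm id}}(t) = (t-1)^d \notin P_i$, since $u_i \ne t-1$ is irreducible), so in \emph{both} parities the only substantive case is $P_1$; your remark that this difficulty is ``not present in the even case of \cite{EOY14}'' is not accurate.

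Your handling of $P_1$ has a real gap. Grant that $\overline{G}^{\circ}$ is connected reductive (it acts irreducibly on $L_\R$, so its unipotent radical is trivial), that its generic element is semisimple, and hence that the hypothesis $(t-1)^2 \mid \Phi_g$ for all $g$ forces $\dim \ker(g-I) \ge 2$ for every $g \in \overline{G}^{\circ}$. From this you claim to ``extract a genuinely $\overline{G}$-stable subspace,'' but the step is not justified: each $g$ having a $\ge 2$-dimensional fixed space does not produce a \emph{common} nonzero invariant subspace---$\bigcap_g \ker(g-I)$ may well be $\{0\}$---and the ``Lie-algebra argument'' you gesture at does not close this.

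What makes the bare citation of \cite[Lemma~4.5]{EOY14} legitimate for the modified $P_1$ is that the argument there first establishes $\overline{G}^{\circ} = {\rm SO}(L)^{\circ}$, i.e.\ Zariski density of $G$ in ${\rm SO}(L)$. This is precisely the possibility you set aside (``$\overline{G}$ \ldots could conceivably be small''), but in Lorentzian signature it is forced: a connected reductive subgroup of ${\rm SO}(1,d-1)$ acting irreducibly on $L_\R$ cannot be compact (a compact group acting irreducibly preserves a definite invariant form, contradicting signature $(1,d-1)$), and the only non-compact connected reductive subgroup of ${\rm SO}(1,d-1)$ acting irreducibly on the standard representation is ${\rm SO}(1,d-1)^{\circ}$ itself. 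Once $\overline{G}^{\circ} = {\rm SO}(L)^{\circ}$, avoiding each $P_i$---including the modified $P_1$---is immediate, since ${\rm SO}(L)^{\circ}$ visibly contains elements whose characteristic polynomial is $(t-1)$ times a Salem polynomial of degree $d-1$; the lemma then follows by density of $G$ in $\overline{G}$. If you instead want to argue directly about a possibly smaller $\overline{G}$, you must supply the missing implication from ``every element has large $1$-eigenspace'' to ``there is a nonzero proper invariant subspace,'' and that is not a one-line remark.
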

\begin{proof} See the proof of \cite[Lemma 4.5]{EOY14}. 
\end{proof}

This completes the proof of Theorem \ref{thm:BlancCantat}.

\end{proof}

\begin{remark}
In order to prove Theorem \ref{thm:main1}, we essentially only need the case i) of Theorem \ref{thm:BlancCantat}.  However, we need both two cases of Theorem \ref{thm:BlancCantat} to prove Theorem \ref{thm:main2} which can apply to other K3 surfaces besides supersingular K3 surfaces.
\end{remark}

\begin{thm} \label{thm:StableSublattice}
Let $L$ be a hyperbolic lattice of signature $(1, r+1)$ with $r \ge 0$ and let $e \in L$ be a primitive element such that $(e, e) = 0$. Let $g \in {\rm SO}(L)(\Z)$ such that ${\rm ord}(g)=\infty$ and  $g(e) = e$. Let $V$ be a  $g$-stable $\R$-linear subspace of $L_{\R}$. Then either  $V$ is in the hypersurface $e^{\perp}$ in $L_{\R}$, or  $e\in V$ (or both).
\end{thm}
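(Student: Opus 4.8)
The plan is to exploit the fact that $g$ fixes the isotropic vector $e$ and has infinite order, which (via Proposition~\ref{salem} and the structure of the characteristic polynomial) forces a parabolic behaviour: the unipotent part of $g$ must be nontrivial, and its interaction with $V$ produces the dichotomy. Suppose $V$ is \emph{not} contained in $e^{\perp}$; I want to conclude $e \in V$. First I would pick $v \in V$ with $(v,e) \ne 0$. The idea is to apply powers of $g$ to $v$ and extract $e$ as a limit (or as an explicit linear combination) of the vectors $g^n v$. To make this work, the key step is to understand the action of $g$ on $L_{\R}$ restricted to a suitable $g$-invariant subspace containing $e$ and $v$. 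Since $(e,e)=0$ and $g(e)=e$, the form restricted to $e^{\perp}$ is degenerate with radical $\R e$, and $g$ descends to an isometry of the negative definite (or negative semidefinite modulo $\R e$) space $e^{\perp}/\R e$; because $g$ has infinite order while $\mathrm{O}(e^{\perp}/\R e)$ is finite, some power $g^m$ acts trivially on $e^{\perp}/\R e$. Replacing $g$ by $g^m$ (this is harmless: a $g$-stable subspace is $g^m$-stable, and $\mathrm{ord}(g^m)=\infty$), I may assume $g$ acts as identity on $e^{\perp}/\R e$.

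With this reduction, $g$ is unipotent: for $w \in e^{\perp}$ one has $g(w) = w + \lambda(w)\, e$ for some linear functional $\lambda$ on $e^{\perp}$, and for a general $v$ with $(v,e)=c\ne 0$ one computes $g(v) = v + a e + w_0$ for some $w_0 \in e^{\perp}$ and scalar $a$; a short calculation using that $g$ preserves the form and fixes $e$ pins down how $(g^n v)$ grows. Iterating, $g^n v - v$ lies in the span of $e$ and the $g$-translates of $w_0$, and the "secular" term grows linearly in $n$ with leading coefficient a nonzero multiple of $e$ (nonzero precisely because $(v,e)\ne 0$ forces $\lambda \ne 0$ on the relevant vector, else $g$ would have finite order on $\langle e, v, e^{\perp}\rangle$, contradicting $\mathrm{ord}(g)=\infty$). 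Hence the vectors $g^n v \in V$, suitably combined — e.g. $\tfrac{1}{n}(g^n v - v)$ as $n \to \infty$, or a fixed integer combination such as $g^{2}v - 2 g v + v$ — produce a nonzero multiple of $e$ lying in $V$ (using that $V$ is a closed $\R$-subspace, hence contains such limits, or simply that $V$, being $g$-stable, contains $(g-\mathrm{id})^2 v$). Therefore $e \in V$, as desired.

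The main obstacle I anticipate is handling the "degenerate" linear algebra cleanly: the bilinear form on $e^{\perp}$ is only negative \emph{semi}definite with one-dimensional radical $\R e$, so one must be careful that the functional $\lambda$ is genuinely nonzero and that the growth of $g^n v$ is honestly linear (not bounded and not faster than linear) — this is exactly where $\mathrm{ord}(g) = \infty$ is used, and it should be isolated as a small lemma: an isometry of a hyperbolic lattice fixing a primitive isotropic $e$ and acting trivially on $e^\perp/\R e$ has infinite order if and only if it is a nontrivial "Eichler transvection" in $e$. Once that structural fact is in hand, extracting $e$ from $V$ is essentially formal. A secondary point to get right is the reduction $g \rightsquigarrow g^m$: one must note that replacing $g$ by a power does not change the hypothesis (infinite order is preserved, $g(e)=e$ is preserved) nor the conclusion (the same $V$ works), so no generality is lost.
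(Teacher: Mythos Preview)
Your proposal is correct and follows essentially the same route as the paper's own proof: both replace $g$ by a power so that it acts trivially on $e^{\perp}/\R e$ (the paper phrases this via an explicit upper-triangular matrix with identity middle block), and both then extract $e$ from $V$ as the second difference $(g-\mathrm{id})^2 v$ of a vector $v\in V\setminus e^{\perp}$. The only cosmetic difference is that the paper works in coordinates (showing $\mathbf b\neq 0$ and then $a_r\neq 0$ by direct computation of $(g^k u,g^k u)$), whereas you package the same computations as properties of the functional $\lambda$ and the vector $w_0$; your anticipated ``main obstacle''---that $(g-\mathrm{id})^2 v\neq 0$---is exactly the paper's claim $a_r\neq 0$, and its resolution uses that $g$ preserves the form (forcing $(\xi,\xi)=0$, hence $\xi\in\R e$, hence $g=\mathrm{id}$) rather than infinite order alone, so when you write up the isolated lemma be sure to invoke the isometry condition explicitly.
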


\begin{proof}
 Choose a $\Q$-bases of $L_{\Q}$: $$\langle e, w_1, \cdots , w_{d-2}, u \rangle,$$ where $w_i\in e^{\perp}$, and $(u,e)=1$.

By \cite[proof of Lemma 3.6]{Og09} (see also \cite[Lemma 4.7]{EOY14}), replacing $g$ by a suitable power $g^N$ ($N> 0$) if necessary, we may assume $$g= \left(\begin{array}{rrr}
1 & {\mathbf a}^t & c\\
{\mathbf 0} & I_r  & {\mathbf b}\\
0 & {\mathbf 0}^{t} & 1
\end{array} \right)\,\, ,$$ with respect to the $\Q$-bases chosen above. Here $1, 0 \in \Q$ are the unit and the zero, $c$ is in $\Q$, $I_r$ is the $r \times r$ identity matrix, ${\mathbf 0} \in \Q^r$ is the zero vector,  ${\mathbf b} \in \Q^r$ is a column vector,   ${\mathbf a}^t$ 
is the transpose of a column vector ${\mathbf a} \in \Q^r$, and simiarly for ${\mathbf 0}^{t}$.

We claim that ${\mathbf b}\neq {\mathbf 0}$. Suppose otherwise. Then $g(u)=u+ce$, it follows that 
$$(u,u)=(g(u),g(u))=(u+ce, u+ce)=(u,u)+2c(u,e)=(u,u)+2c,$$

which implies $c=0$. Then for all $1\leq i\leq r$ $$(u,w_i)=(g(u),g(w_i))=(u,w_i+a_ie)=(u,w_i)+a_i, {\rm \;where \;\;} a_i={\mathbf a}^t\cdot{\mathbf e_i}.$$ Therefore, $a_i=0$ for all $i$. So $g=Id$, a contradiction to ${\rm ord}(g)=\infty$. Therefore, ${\mathbf b}\neq {\mathbf 0}$.

Without loss of generality, from now on, we may assume ${\mathbf b}={\mathbf e_r}$. 

We claim that $a_r\neq 0$. Suppose otherwise. Then for all $k\geq 1$, $$g^{k}(u)=u+kw_r+kce.$$ Since $g^{k}$ preserves intersection form, it follows that $$(u,u)=(g^{k}(u),g^{k}(u))=(u,u)+k^2(w_r,w_r)+2k(u,w_r)+2kc$$ whence $$(w_r,w_r)k^2+(2(u,w_r)+2c)k=0$$ for all positive integers $k$, a contradiction to $(w_r,w_r)<0$. Therefore, $a_r\neq 0$.

If $V\subset e^{\perp},$ then we are done.

From now on, we assume $V$ is not contained in $e^\perp$. Then there exists $0\neq v\in V$ of the following form $$v=u+\alpha e+\sum_{i=1}^{r}\beta_i w_i,$$ where $\alpha$, $\beta_i\in \R$. Then 
$g(v)=u+w_r+(c+\alpha)e+\sum_{i=1}^{r}\beta_i (w_i+a_ie)$ whence $g(v)-v=w_r+ce+\sum_{i=1}^{r}\beta_i a_ie.$ Then $g(g(v)-v)-(g(v)-v)=a_re\in V$ since $V$ is $g$-stable. It follows that $e\in V$ by $a_r\ne 0$.
\end{proof}

\section{Proof of Theorems \ref{thm:main2}, \ref{thm:main3} and \ref{thm:main1}}\label{ss:ProofofMain}

We need the following result to prove Theorem~\ref{thm:main2}:

\begin{theorem}\label{thm:G}
Let $X$ be a K3 surface defined over an algebraically closed field $k$ of characteristic $p\ne 2,3$. Suppose ${\rm rk}(L_{\infty}(X))\ge 2$, where the sublattice $L_{\infty}(X)\subset {\rm NS}(X)$ is generated by all the elliptic fibrations with infinite automorphism groups. To simplify the notation, we set $L:=L_{\infty}(X)$. Let $d={\rm rk}(L)$. Then there exists a subgroup $G\subset {\rm Aut}(X)$ such that 
\begin{itemize}
\item[1)] $G^{\prime}\subset {\rm SO}^{+}(L)$, where $G^{\prime}:=(\pi\circ\psi)(G)$ (see Section~\ref{ss:elliptic} for definition of $\pi$ and $\psi$), and

\item[2)] Any $G^{\prime}$-stable $\R$-linear subspace of $L_{\R}$ is either $\{0\}$ or $L_{\R}$.
\end{itemize}
\end{theorem}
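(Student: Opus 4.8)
The plan is to build $G$ by taking the subgroup of $\mathrm{Aut}(X)$ generated by the automorphisms $\mathrm{Aut}(X)_e$ for $e$ ranging over $\mathcal{E}_\infty(X)$ — or rather a finite-index subgroup thereof — and then to verify the two properties. For property 1), recall that $L_\infty(X)$ is by definition spanned by $\mathcal{E}_\infty(X)$ and is $\mathrm{Aut}(X)$-stable (noted in Section~\ref{ss:elliptic}), so $\psi$ and hence $\pi\circ\psi$ are defined on all of $\mathrm{Aut}(X)$; the image $(\pi\circ\psi)(\mathrm{Aut}(X))$ lands in $\mathrm{O}(L)$. To land in $\mathrm{O}^+(L)$ one uses the Remark at the end of Section~\ref{ss:salem}: the component $C$ is chosen so that its closure contains all of $\mathcal{E}_\infty(X)$, and since each $f\in\mathrm{Aut}(X)$ permutes $\mathcal{E}_\infty(X)$ (it preserves the ample cone and the set of elliptic fibrations with infinite automorphism group) while preserving the positivity of the intersection form, $f^*|_L$ must send $C$ to $C$, hence lies in $\mathrm{O}^+(L)$. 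To get into $\mathrm{SO}^+(L)$ one simply passes to the index-$\le 2$ preimage $G := (\pi\circ\psi)^{-1}(\mathrm{SO}^+(L))\cap\mathrm{Aut}(X)$; this still surjects — up to index $2$ — onto $\mathrm{SO}^+$-valued isometries, and $G':=(\pi\circ\psi)(G)\subset\mathrm{SO}^+(L)$.

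The substance is property 2). Suppose $V\subset L_\R$ is a nonzero proper $G'$-stable subspace. The idea is to use Theorem~\ref{thm:StableSublattice} once for each generator of $L_\infty(X)$. Fix $e\in\mathcal{E}_\infty(X)$. Since $\mathrm{Aut}(X)_e$ is infinite (Lemma~\ref{lem:einf}) and, by Lemma~\ref{lem:Kerfinite}, $\ker(\pi\circ\psi)$ is finite, the image of $\mathrm{Aut}(X)_e$ under $\pi\circ\psi$ is an infinite subgroup of $\mathrm{O}(L)$ fixing $e$ (note $e\in L$ since $e$ generates part of $L$, and $e$ is primitive in $\mathrm{NS}(X)$, hence primitive in $L$ as $L$ is primitively embedded — or one argues directly). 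Intersecting with $G$ still gives an infinite group, so there is $g\in G$ with $g^*|_L$ of infinite order fixing $e$; since $g\in G$ we have $g^*|_L\in\mathrm{SO}(L)(\Z)$. Apply Theorem~\ref{thm:StableSublattice} with this $g$: either $V\subset e^\perp$ (inside $L_\R$) or $e\in V$. Now run this over all $e\in\mathcal{E}_\infty(X)$: if $e\in V$ for even one such $e$, then because $V$ is $G'$-stable and the $G'$-orbit of that $e$ — together with the $\mathcal{E}_\infty$ elements it is linked to — spans $L$ (here one needs that $\mathrm{Aut}(X)$ acts with "enough" transitivity, or more simply that the span of the $G'$-orbit of a single nonzero vector, being a nonzero $G'$-stable subspace contained in... ), we conclude $V=L_\R$. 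Otherwise $V\subset e^\perp$ for every $e\in\mathcal{E}_\infty(X)$, so $V\subset\bigcap_{e\in\mathcal{E}_\infty(X)}e^\perp = L_\infty(X)^\perp\cap L_\R = \{0\}$ because $L=L_\infty(X)$ is nondegenerate (it is hyperbolic of signature $(1,d-1)$), contradicting $V\ne\{0\}$.

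The main obstacle is the "$e\in V$ implies $V=L_\R$" step: knowing one isotropic generator lies in $V$ does not immediately force all of them in. The clean way around it is to strengthen the dichotomy: for each $e$ one in fact shows that $V\supset L_\R$ once $e\in V$, by noting that if $e\in V$ then, applying Theorem~\ref{thm:StableSublattice} to a second fibration class $e'$ with $e'\notin e^\perp$ (such $e'$ exists whenever $d\ge 2$, since $\mathcal{E}_\infty(X)$ spans a hyperbolic lattice and two isotropic classes spanning a hyperbolic plane are non-orthogonal), forces $e'\in V$ as well, and iterating along a chain of pairwise non-orthogonal $\mathcal{E}_\infty$-classes connecting any generator to $e$ shows $V$ contains a generating set of $L$. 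One must check such a connecting chain exists — this follows because the span of $\mathcal{E}_\infty(X)$ is the full nondegenerate lattice $L$, so the "non-orthogonality graph" on $\mathcal{E}_\infty(X)$ is connected (a disconnection would split $L$ into an orthogonal direct sum with one piece isotropic, impossible for a nondegenerate hyperbolic lattice). Assembling these observations, together with the finiteness of $\ker(\pi\circ\psi)$ from Lemma~\ref{lem:Kerfinite} to guarantee infinite-order elements survive in $G'$, completes the argument.
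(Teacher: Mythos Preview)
Your proposal is correct and follows the same essential strategy as the paper: build $G$ from the stabilizers $\mathrm{Aut}(X)_e$, pass to $\mathrm{SO}^+(L)$, and then use Theorem~\ref{thm:StableSublattice} for each $e\in\sE_\infty(X)$ to force $V\subset e^\perp$ for all $e$, whence $V=\{0\}$.

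The one place where you make life harder than necessary is the ``$e\in V\Rightarrow V=L_\R$'' step and the accompanying connectedness argument for the non-orthogonality graph. You do not need a chain at all: by the Hodge index theorem, any two \emph{distinct} primitive nef isotropic classes $e,e'\in\sE(X)$ satisfy $(e,e')>0$, so the non-orthogonality graph on $\sE_\infty(X)$ is already the complete graph. The paper exploits this more directly and thereby avoids your case split entirely: assuming $V\ne L_\R$, choose a single $e_0\in\sE_\infty(X)\setminus V$; Theorem~\ref{thm:StableSublattice} then gives $V\subset e_0^\perp$; now for every other $e'\in\sE_\infty(X)$ the inequality $(e_0,e')>0$ forces $e'\notin V$, so Theorem~\ref{thm:StableSublattice} applies again and $V\subset e'^\perp$; hence $V\subset\bigcap_{e}e^\perp=\{0\}$. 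This is the same mechanism you use, just organized so that the ``$e\in V$'' branch never arises. Your construction of $G$ as the index-$\le 2$ preimage of $\mathrm{SO}^+(L)$ is a perfectly good alternative to the paper's choice of explicit generators $g_e$; either way one needs, and you correctly invoke, Lemma~\ref{lem:Kerfinite} to ensure infinite-order elements survive in $G'$.
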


\begin{proof}
In order to construct $G$, we need the following:
\begin{lemma}\label{lem:g_{e}}
For any $e\in \sE_{\infty}(X)$, there exists $g_{e}\in {\rm Aut}(X)_{e}$ such that  $(\pi\circ \psi )(g_{e})\in {\rm SO}(L)$ and ${\rm ord}((\pi\circ \psi )(g_{e}))=\infty$.
\end{lemma}

\begin{proof}
 By definition of $\sE_{\infty}(X)$, ${\rm Aut}(X)_{e}$ is an infinite subgroup of ${\rm Aut}(X)$. Since, by Lemma \ref{lem:Kerfinite},  ${\rm Ker}(\pi\circ \psi)\cap {\rm Aut}(X)_{e}$ is finite, the image $(\pi\circ \psi) ({\rm Aut}(X)_{e})$ is an infinite subgroup of ${\rm O}(L)$. Thus, there exists $h\in (\pi\circ \psi) ({\rm Aut}(X)_{e})\cap {\rm SO}(L)$ such that ${\rm ord}(h)=\infty$ by \cite[Proposition 2.2 (3)]{Og07} (notice that the proof of \cite[Proposition 2.2 (3)]{Og07} is based on even hyperbolic lattices, and is valid in any characteristic). Choose any $g_{e}\in (\pi\circ \psi)^{-1}(h)$. Then order ${\rm ord}(g_{e})=\infty$ and $(\pi\circ \psi )(g_{e})\in {\rm SO}(L)$ .

\end{proof}

Now, for any $e\in \sE_{\infty}(X)$, by Lemma~\ref{lem:g_{e}}, we can choose some $g_{e}\in {\rm Aut}(X)_{e}$ such that $(\pi\circ \psi )(g_{e})\in {\rm SO}(L)$ and ${\rm ord}((\pi\circ \psi )(g_{e}))=\infty$. We set $G:= $ the subgroup of ${\rm Aut}(X)$ generated by $\{g_{e}|e\in\sE_{\infty}(X)\}$. Let $G^{\prime}:=(\pi\circ\psi)(G)$.  Then $G^{\prime}\subset {\rm SO}(L)$. Since $G^{\prime}(\sE_{\infty}(X))=\sE_{\infty}(X)$, it follows that  $G^{\prime}\subset {\rm SO}^{+}(L)$, which is the statement 1).

Next we prove the statement 2). Let $V$ be a $G^{\prime}$-stable $\R$-linear subspace of $L_{\R}$. We may assume $V\ne L_{\R}$ (otherwise, we are done). Then there exists $e_{0}\in \sE_{\infty}(X)$ such that $e_{0 }\notin V$. Since $V$ is $G^{\prime}$-stable, it follows that $V$ is also $(\pi\circ\psi)(g_{e_{0}})$-stable. Then by Theorem~\ref{thm:StableSublattice},  we have that $V\subset e_{0}^{\bot}$.  For any $e^{\prime} \in \sE_{\infty}(X)$ such that $e^{\prime}\ne e_{0}$, by Hodge-index Theorem, we have that $(e_{0}, e^{\prime})>0$, which implies $e^{\prime}\notin V$. Then by Theorem~\ref{thm:StableSublattice} again, $V\subset e^{\prime \bot}$. Therefore, $V\subset \cap_{e\in\sE_{\infty}(X)}e^{\perp}$.  But $\R\langle \sE_{\infty}(X)\rangle=L_{\R}$ and $L$ is a non-degenerated lattice. Thus $V=0$. This completes the proof of Theorem~\ref{thm:G}.

\end{proof}

%
%

\begin{proof}[Proof of Theorem~\ref{thm:main2}]
We prove the case 1) and the proof for the case 2) is similar.

Suppose $d$ is even. Let $L=L_{\infty}(X)$. Since $d\ge 2$, it follows that the orthogonal complement $L^{\perp}$ of $L$ in ${\rm NS}(X)$ is negative definite. Note that, for any $f\in {\rm Aut}(X)$, we have $f^{*}(L)=L$ and $f^{*}(L^{\perp})=L^{\perp}$. So the Salem degree of any automorphism of $X$ is an even integer $\leq d$. Thus, ${\rm max}\{\text{Salem degree of } f|\; f\in {\rm Aut}(X)\}\le d$. On the other hand, by Theorem~\ref{thm:BlancCantat} and Theorem~\ref{thm:G}, $X$ has an automorphism the entropy of which is a Salem number of degree $d$, which implies ${\rm max}\{\text{Salem degree of } f|\; f\in {\rm Aut}(X)\}\ge d$. Therefore, ${\rm max}\{\text{Salem degree of } f|\; f\in {\rm Aut}(X)\}=d$.
\end{proof}


\begin{proof}[Proof of Theorem~\ref{thm:main3}]
If $Y$ has exactly one elliptic fibration with infinite automorphism group (i.e., ${\rm rk}(L_{\infty}(Y))=1$), then every automorphism of $Y$ must preserve this elliptic fibration. Then, by \cite[Proposition~2.9]{Og07} (again the proof of \cite[Proposition~2.9]{Og07} is valid in any characteristic), the entropy of any automorphism of $Y$ must be zero, which implies ${\rm max}\{\text{Salem degree of } f|\; f\in {\rm Aut}(Y)\}=0$ (thus the conclusion of Theorem~\ref{thm:main3} is true).  

Therefore,  we may assume $Y$ has at least two elliptic fibrations with infinite automorphism groups. Then by Theorem~\ref{thm:main2}, it suffices to prove ${\rm rk}(L_{\infty}(X))\ge {\rm rk}(L_{\infty}(Y))$. 

Fix an isometric embedding $\iota : {\rm NS}(X) \hookrightarrow {\rm NS}(Y)$. Let $d={\rm rk}(L_{\infty}(Y))$. Then we can choose $e_{1},...e_{d}\in \sE_{\infty}(Y)$ such that $\langle e_{1},..., e_{d}\rangle$ forms a $\Q$-basis of $L_{\infty}(Y)\otimes \Q$.

Let $h_{Y}\in {\rm Amp}(Y)\cap {\rm NS}(Y)$ be an ample class.  For any $c\in {\rm NS}(Y)$ of square $-2$, by Riemann-Roch Theorem, either $c$ or $-c$ is effective. Thus,  the intersection pairing between $h_{Y}$ and any class in ${\rm NS}(Y)$ of square $-2$ is not zero. Since $\iota ({\rm NS}(X))$ is a sublattice of ${\rm NS}(Y)$ of finite index, there exists a sufficiently large integer $N>0$, such that $Ne_{1},...,Ne_{d},Nh_{Y}\in \iota({\rm NS}(X))$.  Note that the intersection pairing between $Nh_{Y}$ and any class in $\iota({\rm NS}(X))$ of square $-2$ is not zero. Since the ample cone ${\rm Amp}(X)$ of $X$ is a standard fundamental domain for the Weyl group $W({\rm NS}(X))$, it follows that there exists $\alpha \in W({\rm NS}(X))$ such that $\alpha (\iota^{-1} (Nh_{Y}))\in {\rm Amp}(X)\cap {\rm NS}(X)$. Then we claim the following:

\begin{lemma}
 $\alpha (\iota^{-1} (Ne_{1})),..., \alpha (\iota^{-1} (Ne_{d}))\in {\rm Nef} (X)\cap {\rm NS}(X)$, where ${\rm Nef} (X)$ is the nef cone of $X$. 
\end{lemma}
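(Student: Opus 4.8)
The plan is to show that each $\alpha(\iota^{-1}(Ne_j))$ lies in $\mathrm{Nef}(X)$ by verifying it has nonnegative intersection with every $(-2)$-curve, using the fact that $\alpha(\iota^{-1}(Nh_Y))$ is ample on $X$ together with the known position of the $e_j$ on $Y$. First I would record the structure of the setup: $W(\mathrm{NS}(X))$ acts on $\mathrm{NS}(X)$ preserving the intersection form, the positive cone is subdivided into Weyl chambers, and $\mathrm{Amp}(X)$ is the chamber bounded by the walls $c^\perp$ for $(-2)$-curves $c$; a class $x$ in the positive cone is nef exactly when $(x,c)\ge 0$ for every $(-2)$-curve $c$. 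Since $e_j\in\mathcal E(Y)$ is nef on $Y$, it is isotropic and lies in the closure of $\mathrm{Amp}(Y)$; hence $Ne_j$ lies in the closure of the positive cone of $\mathrm{NS}(X)$ after transporting via $\iota^{-1}$, and the same is preserved under $\alpha\in\mathrm O(\mathrm{NS}(X))$. So the only thing to check is the sign of the pairings against $(-2)$-curves.

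The key step is the following comparison: because $\alpha(\iota^{-1}(Nh_Y))\in\mathrm{Amp}(X)$, it pairs strictly positively with every $(-2)$-curve $c$ on $X$, i.e. $(\iota^{-1}(Nh_Y),\alpha^{-1}c)>0$, equivalently $(Nh_Y,\iota(\alpha^{-1}c))>0$ in $\mathrm{NS}(Y)\otimes\Q$. Now $\iota(\alpha^{-1}c)$ is a class of square $-2$ in $\mathrm{NS}(Y)$ (up to the bounded denominator $N$; one should clear denominators and note $\iota(N\alpha^{-1}c)$ has square $-2N^2$, so $N\iota(\alpha^{-1}c)$ is $N$ times a $(-2)$-class — the cleanest formulation is to track the actual lattice element $\delta_c:=\iota(\alpha^{-1}(c))\in\mathrm{NS}(Y)\otimes\Q$ which has square $-2$ and, after scaling, lies in $\mathrm{NS}(Y)$). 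Since $(h_Y,\delta_c)>0$ and $h_Y$ is ample, Riemann–Roch forces $\delta_c$ (suitably scaled to a genuine lattice element of square $-2$) to be effective on $Y$; therefore $(e_j,\delta_c)\ge 0$ because $e_j$ is nef on $Y$. Unwinding, $(e_j,\delta_c)=(e_j,\iota(\alpha^{-1}c))=(\iota^{-1}(e_j),\alpha^{-1}c)=(\alpha(\iota^{-1}(e_j)),c)$, so $(\alpha(\iota^{-1}(Ne_j)),c)\ge 0$ for every $(-2)$-curve $c$ on $X$. Combined with membership in the closed positive cone, this gives $\alpha(\iota^{-1}(Ne_j))\in\mathrm{Nef}(X)$.

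The main obstacle is bookkeeping with the index-$N$ issue: $\iota$ need not be surjective, so $\iota^{-1}(e_j)$ and $\alpha^{-1}(c)$ are only rational classes of $\mathrm{NS}(X)$, and one must be careful that (i) $\iota(\alpha^{-1}(c))$, once cleared of denominators, is an honest class of $\mathrm{NS}(Y)$ of negative square to which Riemann–Roch applies, and (ii) the positivity $(h_Y,-)>0$ really does single out the effective representative rather than its negative. Both points are handled by the standard dichotomy ``$c^2=-2\Rightarrow c$ or $-c$ effective'' applied on $Y$ to a positive integer multiple of $\iota(\alpha^{-1}(c))$, using that pairing with the ample $h_Y$ is a strictly monotone function that cannot vanish on a $(-2)$-class (this is exactly the property of $h_Y$ invoked just before the lemma). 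Once the effective representative is identified on $Y$, nefness of $e_j$ on $Y$ closes the argument, and scaling back by $N$ and $\alpha$ (both preserving the sign of intersections and the positive cone) yields the claim. Finally I would note that primitivity and isotropy of $\alpha(\iota^{-1}(Ne_j))/N$ are not needed for the lemma as stated, only nefness — those will be addressed afterwards when these classes are used to produce elliptic fibrations with infinite automorphism group on $X$.
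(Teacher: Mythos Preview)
Your approach is essentially the same as the paper's: test nefness against the $(-2)$-curves of $X$, transport each such class to $Y$ via $\iota\circ\alpha^{-1}$, use Riemann--Roch and positivity against $h_Y$ to identify the effective sign, and then invoke nefness of $e_j$ on $Y$.

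However, the ``main obstacle'' you flag is a phantom. The element $\alpha$ lies in $W(\mathrm{NS}(X))\subset \mathrm{O}(\mathrm{NS}(X))$, the \emph{integral} orthogonal group, so for a $(-2)$-curve $C$ on $X$ the class $\alpha^{-1}([C])$ is already an honest element of $\mathrm{NS}(X)$ with square $-2$; and since $\iota:\mathrm{NS}(X)\hookrightarrow\mathrm{NS}(Y)$ is an isometric embedding of lattices, $\iota(\alpha^{-1}([C]))\in\mathrm{NS}(Y)$ is an honest $(-2)$-class as well. No denominators arise on this side of the argument (the factor $N$ was introduced only to guarantee that $Nh_Y$ and the $Ne_j$ land in $\iota(\mathrm{NS}(X))$, not to clear anything coming from $C$). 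Riemann--Roch on $Y$ therefore applies directly to $\iota(\alpha^{-1}([C]))$, exactly as in the paper. Once you drop the spurious scaling discussion, your write-up coincides with the paper's proof; your extra remark that $\alpha(\iota^{-1}(Ne_j))$ lies in the closure of the correct component of the positive cone (which follows from $(h_Y,e_j)>0$) is a harmless and indeed clarifying addition.
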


\begin{proof}

Let $C\subset X$ be a smooth rational curve. Then $(\alpha (\iota^{-1} (Nh_{Y})), [C])>0$ by ampleness of $\alpha (\iota^{-1} (Nh_{Y}))$, where $[C]$ denotes the class of $C$ in ${\rm NS}(X)$. Then $(Nh_{Y},\iota(\alpha^{-1}([C])))>0$. Since $\iota(\alpha^{-1}([C]))$ of square $-2$, by Riemann-Roch Theorem, $\iota(\alpha^{-1}([C]))$ is an effective class in $ {\rm NS}(Y)$. Then $(Ne_{i}, \iota(\alpha^{-1}([C])))\ge 0$, for all $i$. Thus, $(\alpha(\iota^{-1}(Ne_{i})),[C])\ge 0$. So $\alpha(\iota^{-1}(Ne_{i}))$ is a nef class in ${\rm NS}(X)$.

\end{proof}

For any $1\le i\le d$, let $e_{i}^{\prime}$ be the primitive class in ${\rm NS}(X)$ such that $\R^{>0}e_{i}^{\prime}=\R^{>0}\alpha(\iota^{-1}(Ne_{i}))$. Then $e_{i}^{\prime}\in \sE(X)$ according to Piatetsky-Shapiro and Shafarevich \cite{PS71}. 

 \begin{lemma}
 $e_{i}^{\prime}\in \sE_{\infty}(X)$ for all $i$.
 \end{lemma}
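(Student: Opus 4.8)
The plan is to use Lemma~\ref{lem:einf} to detect that each $e_i'$ lies in $\sE_\infty(X)$, by transporting the corresponding lattice-theoretic criterion for $e_i$ from $\mathrm{NS}(Y)$ to $\mathrm{NS}(X)$. Concretely, I would first record that $e_i \in \sE_\infty(Y)$ forces $\mathrm{rk}(e_i^\perp) - \mathrm{rk}((e_i^\perp)^{(2)}) > 0$, where both orthogonal complements are taken in $\mathrm{NS}(Y)$ and $(e_i^\perp)^{(2)}$ denotes the sublattice of $e_i^\perp$ spanned by $e_i$ together with all $(-2)$-classes orthogonal to $e_i$. The goal is then to show the analogous strict inequality holds for $e_i'$ inside $\mathrm{NS}(X)$, after which Lemma~\ref{lem:einf} gives $e_i' \in \sE_\infty(X)$ directly.

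The key observation is that the composite isometry $\alpha \circ \iota^{-1}$ (extended $\Q$-linearly) carries the line $\R e_i$ onto the line $\R e_i'$, hence carries $e_i^\perp \otimes \Q$ isometrically onto $(e_i')^\perp \otimes \Q$; in particular $\mathrm{rk}(e_i^\perp) = \mathrm{rk}((e_i')^\perp)$. So the content reduces to a comparison of the ranks of the $(-2)$-generated sublattices. Here I would argue that $\iota$ being a finite-index embedding means $(e_i')^\perp \otimes \Q$ and $e_i^\perp \otimes \Q$ are identified as quadratic $\Q$-vector spaces, and a class of square $-2$ in the former corresponds, after clearing denominators, to a class of some negative square in the latter and vice versa. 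The subtle point is that a $(-2)$-class need not map to a $(-2)$-class, so one cannot identify the two $(-2)$-sublattices on the nose. However, to bound $\mathrm{rk}((e_i')^\perp)^{(2)})$ it suffices to bound the \emph{span} of the $(-2)$-classes; the real input one needs is the reverse: the span of $(-2)$-classes in $(e_i')^\perp$ should be no larger than (the image of) the corresponding negative-definite part, which follows because the "transcendental-like" complement — the orthogonal complement of $(e_i^\perp)^{(2)}$ inside $e_i^\perp$, a lattice of positive rank by hypothesis — is preserved up to $\Q$-isometry and contains no $(-2)$-classes on either side (a $(-2)$-class generates a rank-one sublattice, and adding it would strictly increase the $(-2)$-span). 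I would make this precise by showing $\mathrm{rk}((e_i^\perp)^{(2)}) = \mathrm{rk}((e_i')^\perp)^{(2)})$ via the fact that being in the $\Q$-span of $(-2)$-classes is invariant under $\Q$-isometry of the ambient quadratic space together with the finite-index identification.

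Assembling: $\mathrm{rk}((e_i')^\perp) = \mathrm{rk}(e_i^\perp)$ and $\mathrm{rk}((e_i')^\perp)^{(2)}) = \mathrm{rk}((e_i^\perp)^{(2)})$, so $\mathrm{rk}((e_i')^\perp) - \mathrm{rk}((e_i')^\perp)^{(2)}) = \mathrm{rk}(e_i^\perp) - \mathrm{rk}((e_i^\perp)^{(2)}) > 0$, and Lemma~\ref{lem:einf} yields $e_i' \in \sE_\infty(X)$, as desired. Since $\langle e_1,\dots,e_d\rangle$ is a $\Q$-basis of $L_\infty(Y)\otimes\Q$ and $\alpha\circ\iota^{-1}$ is injective on $\Q$-vector spaces, the classes $e_1',\dots,e_d'$ are $\Q$-linearly independent in $\mathrm{NS}(X)$, all lying in $\sE_\infty(X)$, hence $\mathrm{rk}(L_\infty(X)) \ge d = \mathrm{rk}(L_\infty(Y))$.

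I expect the main obstacle to be the bookkeeping around the fact that the isometry $\alpha\circ\iota^{-1}$ is only defined over $\Q$ (not over $\Z$), so that $(-2)$-classes are not preserved and one must phrase everything in terms of $\Q$-spans and ranks rather than in terms of the integral $(-2)$-sublattices themselves; care is needed to confirm that "lies in the $\Q$-span of $(-2)$-vectors" transfers correctly across a finite-index sublattice, which is where I would spend the bulk of the verification.
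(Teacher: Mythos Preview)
Your overall strategy is the same as the paper's: apply Lemma~\ref{lem:einf} on both sides and compare the two numerical invariants. The equality $\mathrm{rk}((e_i')^\perp)=\mathrm{rk}(e_i^\perp)$ is fine. The problem is your treatment of the $(-2)$-sublattices.

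You aim for the \emph{equality} $\mathrm{rk}\big(((e_i')^\perp)^{(2)}\big)=\mathrm{rk}\big((e_i^\perp)^{(2)}\big)$, and you justify it by asserting that ``being in the $\Q$-span of $(-2)$-classes is invariant under $\Q$-isometry of the ambient quadratic space together with the finite-index identification.'' That assertion is false: the set of integral $(-2)$-vectors, and hence the rank of its span, depends on the integral lattice, not merely on the underlying $\Q$-quadratic space. For a trivial example, $\langle -2\rangle$ has a $(-2)$-vector while its index-$2$ sublattice $\langle -8\rangle$ has none, so the $(-2)$-span drops from rank $1$ to rank $0$. In particular, passing from $\mathrm{NS}(Y)$ to the finite-index sublattice $\iota(\mathrm{NS}(X))$ can strictly shrink the $(-2)$-span; your ``transcendental-like complement'' argument does not rule this out, and indeed equality can fail.

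Fortunately, equality is not needed. You only need the inequality $\mathrm{rk}\big(((e_i')^\perp)^{(2)}\big)\le \mathrm{rk}\big((e_i^\perp)^{(2)}\big)$, and this follows by going in the \emph{forward} direction: $\iota\circ\alpha^{-1}:\mathrm{NS}(X)\hookrightarrow\mathrm{NS}(Y)$ is an \emph{integral} isometric embedding (both $\iota$ and $\alpha^{-1}\in W(\mathrm{NS}(X))$ are integral isometries) sending $e_i'$ to a positive integer multiple of $e_i$. Hence it carries $(e_i')^\perp$ into $e_i^\perp$, carries every $(-2)$-class of $(e_i')^\perp$ to a $(-2)$-class of $e_i^\perp$, and carries $e_i'$ into $\Z e_i$; therefore it embeds $((e_i')^\perp)^{(2)}$ into $(e_i^\perp)^{(2)}$. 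This is exactly the paper's one-line argument. Your confusion stems from working with $\alpha\circ\iota^{-1}$, which is only a $\Q$-map and does not preserve $(-2)$-classes; use the integral map in the other direction instead.
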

  
  \begin{proof}
  Since $e_{i}\in \sE_{\infty}(Y)$, by Lemma~\ref{lem:einf}, ${\rm rk}(e_i^{\perp})-{\rm rk}((e_i^{\perp})^{(2)})>0$. Since ${\rm rk}(${\rm NS}(X)$)={\rm rk}(${\rm NS}(Y)$)$, it follows that ${\rm rk}(e_i^{\perp})={\rm rk}(e_i^{\prime \perp})$. Since ${\rm NS}(X)$ is isometric to a sublattice of ${\rm NS}(Y)$, it follows that $(e_i^{\prime \perp})^{(2)}$ is isometric to a sublattice of $(e_i^{\perp})^{(2)}$. Then ${\rm rk}((e_i^{\prime \perp})^{(2)})\le {\rm rk}((e_i^{\perp})^{(2)})$. Thus,  ${\rm rk}(e_i^{\prime \perp})-{\rm rk}((e_i^{\prime\perp})^{(2)})\ge  {\rm rk}(e_i^{\perp})-{\rm rk}((e_i^{\perp})^{(2)})>0$. Then, again by Lemma~\ref{lem:einf}, $e_{i}^{\prime}\in \sE_{\infty}(X)$.
  \end{proof}

     Since ${\rm dim}_{\Q}(\Q \langle e_{1},...,e_{d}\rangle)=d$, it follows that ${\rm dim}_{\Q}(\Q\langle e_{1}^{\prime},...,e_{d}^{\prime}\rangle)=d$. Thus, ${\rm rk}(L_{\infty}(X))\ge d={\rm rk}(L_{\infty}(Y))$. Then by Theorem~\ref{thm:main2},$${\rm max}\{\text{Salem degree of } f|\; f\in {\rm Aut}(X)\}\ge {\rm max}\{\text{Salem degree of } f|\; f\in {\rm Aut}(Y)\}.$$ This completes the proof of Theorem~\ref{thm:main3}.
\end{proof}


\begin{proof}[Proof of Theorem~\ref{thm:main1}]
First we consider the cases $p=11$ or $>13$. Let $X(p)\cong {\rm Km}(E\times_{\F_{p}}E)$ be the unique supersingular K3 surface of Artin invariant one. The two natural projections from $E\times_{\F_{p}} E$ to the two factors induce two elliptic fibrations on $X(p)$ with Mordell-Weil rank $4$,  by the formula of Mordell-Weil rank \cite{Shi90}. Hence ${\rm rk}(L_{\infty}(X(p)))\ge 2$. By Theorem~\ref{thm:known},  ${\rm max}\{\text{Salem degree of } f|\; f\in {\rm Aut}(X(p))\}=22$. By \cite{RS78}, the Artin invariant $\sigma (X)$ of $X$ determines  ${\rm NS}(X)$ up to isometry. Moreover, by explicit classification of the lattices ${\rm NS}(X)$ and ${\rm NS}(X(p))$ in \cite{RS78},  ${\rm NS}(X)$ is isometric to a sublattice of ${\rm NS}(X(p))$ (cf.~\cite[proof of Proposition 3.9]{Li15}). Then by Theorem~\ref{thm:main3}, ${\rm max}\{\text{Salem degree of } f|\; f\in {\rm Aut}(X)\}\ge {\rm max}\{\text{Salem degree of } f|\; f\in {\rm Aut}(X(p))\}=22$. Then the maximal Salem degree of automorphisms of $X$ is $22$ since the Picard number of $X$ is $22$. Thus, when $p=11$ or $>13$, there exists $f\in {\rm Aut}(X)$ the entropy of which is the logarithm of a Salem number of degree $22$. The case $p=3$ is proved by \cite{EO15} and \cite{Sh16}. The cases $p=5,7,13$ are proved by \cite{Sh16}. This completes the proof of Theorem~\ref{thm:main1}.

\end{proof}

\section{The exceptional sublattice, elliptic fibrations and  Salem numbers}\label{ss:E}

In this section, we discuss some relationships among the exceptional sublattice, elliptic fibration and the maximal Salem degree of automorphisms of K3 surfaces. 

 Following \cite{Ni14}, for a K3 surface $X$, we define the {\it exceptional sublattice} of the N\'eron-Severi group ${\rm NS}(X)$ by $$E({\rm NS}(X)):=\{x\in {\rm NS}(X)| \text{ the orbit } {\rm Aut}(X)(x) \text{ of } x \text{ in }{\rm NS}(X)  \text{ is finite} \}.$$ Clearly, $E({\rm NS}(X))$ is a primitive sublattice of ${\rm NS}(X)$.  For a sublattice $F\subset {\rm NS}(X)$ we denote by $F_{pr}$ the primitive sublattice $F_{pr}={\rm NS}(X)\cap (F\otimes \Q)\subset {\rm NS}(X)\otimes \Q$ generated by $F$. 

\begin{theorem}\label{thm:E}
(\cite[Theorem 4.1]{Ni14}) Let $X$ be a K3 surface defined over an algebraically closed field $k$ of characteristic $p\ne 2,3$. Suppose $X$ has at least two elliptic fibrations with infinite automorphism groups. Then $$E({\rm NS}(X))=\underset{e\in \sE_{\infty}(X)}{\cap}(e^{\perp})^{(2)}_{pr}=L_{\infty}(X)^{\perp}.$$
\end{theorem}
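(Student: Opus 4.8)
The plan is to establish the two claimed equalities $E({\rm NS}(X))=\bigcap_{e\in \sE_\infty(X)}(e^\perp)^{(2)}_{pr}$ and $\bigcap_{e\in \sE_\infty(X)}(e^\perp)^{(2)}_{pr}=L_\infty(X)^\perp$ separately; the first is essentially \cite[Theorem 4.1]{Ni14}, so the work is in checking that Nikulin's argument goes through verbatim in arbitrary characteristic $p\ne 2,3$, while the second is a more elementary lattice-theoretic identity that I would prove directly using the group-theoretic input already developed in Section~\ref{sec:grouptheory}. Throughout I set $L:=L_\infty(X)$, a hyperbolic lattice of signature $(1,d-1)$ with $d={\rm rk}(L)\ge 2$, so that $L^\perp\subset {\rm NS}(X)$ is negative definite.

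For the inclusion $L^\perp\subseteq \bigcap_e (e^\perp)^{(2)}_{pr}$: fix $e\in\sE_\infty(X)$. Since $e\in L$, any $x\in L^\perp$ satisfies $(x,e)=0$, so $x\in e^\perp$. By Lemma~\ref{lem:einf}, $e\in\sE_\infty(X)$ means ${\rm rk}(e^\perp)>{\rm rk}((e^\perp)^{(2)})$, i.e. $(e^\perp)^{(2)}_{pr}$ is a proper primitive sublattice of $e^\perp$; I then need to see that $L^\perp$ lands inside this $(-2)$-part. The cleanest route is to invoke Theorem~\ref{thm:E}'s first equality in the form $E({\rm NS}(X))\subseteq (e^\perp)^{(2)}_{pr}$ for each $e$, combined with the fact that $L^\perp\subseteq E({\rm NS}(X))$: indeed $L^\perp$ is negative definite and ${\rm Aut}(X)$-stable (as $L$ is ${\rm Aut}(X)$-stable), hence the ${\rm Aut}(X)$-orbit of any of its elements is a set of vectors of bounded negative norm in a definite lattice, thus finite, so $L^\perp\subseteq E({\rm NS}(X))$. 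For the reverse inclusion $E({\rm NS}(X))\subseteq L^\perp$, equivalently $\bigcap_e(e^\perp)^{(2)}_{pr}\subseteq L^\perp$: I would use the group $G\subset {\rm Aut}(X)$ produced by Theorem~\ref{thm:G}, whose image $G'=(\pi\circ\psi)(G)$ lies in ${\rm SO}^+(L)$ and has no nontrivial $G'$-stable subspace of $L_\R$. If $x\in E({\rm NS}(X))$, write $x=x_L+x_\perp$ with $x_L\in L\otimes\Q$, $x_\perp\in L^\perp\otimes\Q$; then ${\rm Aut}(X)(x)$ finite forces the $G$-orbit of $x_L$ to be finite, so the $\R$-span of $G'\cdot x_L$ is a proper (finite-dimensional, $G'$-stable, as it is spanned by a finite orbit it cannot be all of the infinite-type $L_\R$ unless it is nonzero-but-then the full span) $G'$-stable subspace of $L_\R$, hence $x_L=0$ by Theorem~\ref{thm:G}(2); thus $x\in L^\perp$. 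Finally $x\in\bigcap_e(e^\perp)^{(2)}_{pr}$ is automatic for $x\in L^\perp$ once the two orbit-finiteness facts above are in place, giving the chain of inclusions $L^\perp\subseteq E({\rm NS}(X))\subseteq\bigcap_e(e^\perp)^{(2)}_{pr}\subseteq L^\perp$ and hence all three equal.

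The main obstacle is the characteristic-$p$ validity of Nikulin's proof of \cite[Theorem 4.1]{Ni14}: his argument uses the chamber structure of the ample cone, the action of the Weyl group $W({\rm NS}(X))$ generated by $(-2)$-reflections, and the description of $\sE_\infty(X)$ via the Vinberg/Nikulin criterion for infinite stabilizer — all of which, as already noted for Lemma~\ref{lem:einf} and in the proof of Theorem~\ref{thm:main3}, rest only on ${\rm NS}(X)$ being an even hyperbolic lattice and on Riemann--Roch plus Piatetsky-Shapiro--Shafarevich for K3 surfaces, valid for $p\ne 2,3$. I would therefore devote the bulk of the write-up to isolating exactly which geometric inputs Nikulin uses (essentially: $(-2)$-classes have effective representatives up to sign, primitive isotropic nef classes are elliptic fibration classes, and \cite[Theorem 6.1]{LM11} relating ${\rm Aut}(X)$ to $A({\rm NS}(X))$) and remarking that each holds in our setting, rather than re-deriving the combinatorial core. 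A secondary subtlety is the passage between a sublattice and its primitive closure $(\,\cdot\,)_{pr}$: since $E({\rm NS}(X))$ and $L^\perp$ are primitive by construction, the equality with $\bigcap_e(e^\perp)^{(2)}_{pr}$ is consistent, but I should check that intersecting primitive sublattices stays primitive (it does, being a saturated intersection) so that no index discrepancy creeps in.
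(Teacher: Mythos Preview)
Your overall architecture matches the paper's: both cite \cite[Theorem~4.1]{Ni14} for $E({\rm NS}(X))=\bigcap_{e}(e^{\perp})^{(2)}_{pr}$, and both prove $L_{\infty}(X)^{\perp}\subseteq E({\rm NS}(X))$ by the same one-line argument (the orthogonal complement is negative definite and ${\rm Aut}(X)$-stable, hence has finite orthogonal group, hence every orbit in it is finite). Where you diverge is in the remaining inclusion $E({\rm NS}(X))\subseteq L_{\infty}(X)^{\perp}$.

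The paper closes this inclusion trivially: since $(e^{\perp})^{(2)}_{pr}\subseteq e^{\perp}$ for each $e$, one has $\bigcap_{e\in\sE_{\infty}(X)}(e^{\perp})^{(2)}_{pr}\subseteq\bigcap_{e\in\sE_{\infty}(X)}e^{\perp}=L_{\infty}(X)^{\perp}$, the last equality because $L_{\infty}(X)$ is by definition generated by $\sE_{\infty}(X)$. That is the entire argument. Your route through Theorem~\ref{thm:G} is unnecessary here --- you are invoking the irreducibility of the $G'$-action on $L_{\R}$ to deduce something that follows from the definition of an orthogonal complement.

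Moreover, your argument via Theorem~\ref{thm:G} has a genuine gap as written. You claim that the $\R$-span of a finite $G'$-orbit $G'\cdot x_L$ ``cannot be all of the infinite-type $L_{\R}$''; but $L_{\R}$ is finite-dimensional, so this reasoning is vacuous. The correct justification is that if a finite set spans $L_{\R}$ and is permuted by $G'$, then $G'\hookrightarrow{\rm GL}(L_{\R})$ has finite image, hence $G'$ is finite, contradicting that $G'$ contains $(\pi\circ\psi)(g_e)$ of infinite order. This is easy to fix, but the fix is still vastly more work than the paper's direct observation $\bigcap_e(e^{\perp})^{(2)}_{pr}\subseteq L_{\infty}(X)^{\perp}$. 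Your careful discussion of why Nikulin's argument is characteristic-free is reasonable due diligence, though the paper simply cites \cite{Ni14} without further comment.
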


\begin{proof}
Since $X$  has at least two elliptic fibrations with infinite automorphism groups, it follows that the sublattice $L_{\infty}(X)^{\perp}\subset {\rm NS}(X)$ is negative definite. Then  $L_{\infty}(X)^{\perp}\subset E({\rm NS}(X))$ since $L_{\infty}(X)^{\perp}$ is ${\rm Aut}(X)$-stable and ${\rm O}(L_{\infty}(X)^{\perp})$ is a finite group. On the other hand, by \cite[Theorem 4.1]{Ni14}, $E({\rm NS}(X))=\underset{e\in \sE_{\infty}(X)}{\cap}(e^{\perp})^{(2)}_{pr}\subset L_{\infty}(X)^{\perp}$.  Thus, $E({\rm NS}(X))=\underset{e\in \sE_{\infty}(X)}{\cap}(e^{\perp})^{(2)}_{pr}=L_{\infty}(X)^{\perp}.$
\end{proof}

For a K3 surface with even Picard number, we collect various methods to check whether it has an automorphism of maximal possible Salem degree:
   
\begin{theorem}\label{thm:evenPicard}
Let $X$ be a K3 surface defined over an algebraically closed field $k$ of characteristic $p\ne 2,3$. Suppose $X$ has even Picard number $\rho (X)\ge 4$, and suppose $X$ has at least one elliptic fibration with infinite automorphism group. Then the following statements are equivalent to each other:
\begin{itemize}
\item[1)] Any ${\rm Aut}(X)$-stable $\R$-linear subspace of ${\rm NS}(X)\otimes \R$ is either $\{0\}$ or ${\rm NS}(X)\otimes \R$;

\item[2)] Any ${\rm Aut}(X)$-stable $\Q$-linear subspace of ${\rm NS}(X)\otimes \Q$ is either $\{0\}$ or ${\rm NS}(X)\otimes \Q$;

\item[3)] $\Q \langle \sE_{\infty}(X)\rangle= {\rm NS}(X)\otimes \Q$;

\item[4)] There exists $f\in {\rm Aut}(X)$ such that the Salem degree of $f$ is $\rho (X )$;

\item[5)] $\underset{e\in \sE_{\infty}(X)}{\cap}(e^{\perp})^{(2)}_{pr}=\{0\}$;

\item[6)] $E({\rm NS}(X))=\{0\}$.
\end{itemize}
\end{theorem}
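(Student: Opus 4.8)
The plan is to prove the six statements equivalent by establishing a cycle of implications together with a few direct cross-links, leaning on the machinery already set up: Theorem~\ref{thm:main2}, Theorem~\ref{thm:G}, Theorem~\ref{thm:BlancCantat}, Theorem~\ref{thm:E}, and Lemma~\ref{lem:einf}. First I would record the trivial observations. The equivalence $1)\Leftrightarrow 2)$ follows because an ${\rm Aut}(X)$-stable $\Q$-subspace $V\subset {\rm NS}(X)\otimes\Q$ and its real span $V\otimes\R$ determine each other (the action is defined over $\Z$, so the fixed/stable subspaces are defined over $\Q$; one direction is clear, and for the other, $V\otimes\R$ being $\{0\}$ or everything forces $V$ to be, since $\dim_\Q V=\dim_\R(V\otimes\R)$). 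Next, $3)\Rightarrow 2)$: if $\Q\langle\sE_\infty(X)\rangle={\rm NS}(X)\otimes\Q$ then a nonzero ${\rm Aut}(X)$-stable $\Q$-subspace $V$ must contain some $e\in\sE_\infty(X)$, because otherwise Theorem~\ref{thm:StableSublattice} (applied via the element $g_e$ from Lemma~\ref{lem:g_{e}}, whose image has infinite order and fixes $e$) forces $V\subset e^\perp$ for \emph{every} $e\in\sE_\infty(X)$, hence $V\subset\bigcap_e e^\perp=\{0\}$ by nondegeneracy; and once $V$ contains one $e_0$, Hodge index gives $(e_0,e')>0$ for all other $e'\in\sE_\infty(X)$, so $e'\notin e_0^\perp$, and running Theorem~\ref{thm:StableSublattice} the other way shows $e'\in V$ — thus $V\supset\Q\langle\sE_\infty(X)\rangle={\rm NS}(X)\otimes\Q$. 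This is exactly the argument already used inside the proof of Theorem~\ref{thm:G}, now with $L$ replaced by all of ${\rm NS}(X)$.

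For the converse direction $2)\Rightarrow 3)$ (equivalently $1)\Rightarrow 3)$): the subspace $\Q\langle\sE_\infty(X)\rangle\otimes\R=L_\infty(X)\otimes\R$ is ${\rm Aut}(X)$-stable (as noted in Section~\ref{ss:elliptic}, $L_\infty(X)$ is ${\rm Aut}(X)$-stable), and it is nonzero since $X$ has an elliptic fibration with infinite automorphism group; hence by $1)$ it must be all of ${\rm NS}(X)\otimes\R$, giving $3)$. Now $3)\Leftrightarrow 4)$: statement $3)$ says $d:={\rm rk}(L_\infty(X))=\rho(X)$, and since $\rho(X)$ is even and $\ge 4\ge 2$, Theorem~\ref{thm:main2}, case 1), gives an automorphism of Salem degree $d=\rho(X)$, which is $4)$; conversely, any $f\in{\rm Aut}(X)$ satisfies $f^*(L_\infty(X))=L_\infty(X)$ and $f^*(L_\infty(X)^\perp)=L_\infty(X)^\perp$ with $L_\infty(X)^\perp$ negative definite (here we use $d\ge 2$, which holds once $X$ has a Salem-degree-$\rho(X)\ge4$ automorphism — indeed such an $f$ cannot fix a rank-one $L_\infty(X)$ by \cite[Proposition~2.9]{Og07}, and we separately check $d\ne 1$ below), so the Salem degree of $f$ is at most $d$; if it equals $\rho(X)$ then $d=\rho(X)$, i.e. $3)$. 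Finally $5)\Leftrightarrow 6)$ and $6)\Leftrightarrow 3)$ come from Theorem~\ref{thm:E}: that theorem identifies $\bigcap_{e\in\sE_\infty(X)}(e^\perp)^{(2)}_{pr}=E({\rm NS}(X))=L_\infty(X)^\perp$ whenever $X$ has at least two elliptic fibrations with infinite automorphism group, and $L_\infty(X)^\perp=\{0\}$ is equivalent to ${\rm rk}(L_\infty(X))=\rho(X)$, i.e. $3)$.

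The one genuine gap to close — and the step I expect to be the main obstacle — is the hypothesis count in Theorem~\ref{thm:E} and in Theorem~\ref{thm:main2}: both require $X$ to have \emph{at least two} elliptic fibrations with infinite automorphism group (${\rm rk}(L_\infty(X))\ge 2$), whereas Theorem~\ref{thm:evenPicard} only assumes \emph{at least one}. So I must rule out ${\rm rk}(L_\infty(X))=1$ under the standing hypotheses. If ${\rm rk}(L_\infty(X))=1$, then by \cite[Proposition~2.9]{Og07} every automorphism of $X$ has zero entropy, so $4)$ fails; also $\Q\langle\sE_\infty(X)\rangle$ is one-dimensional while $\rho(X)\ge 4$, so $3)$ fails; and the ${\rm Aut}(X)$-stable subspace $L_\infty(X)\otimes\R$ is neither $\{0\}$ nor everything, so $1),2)$ fail. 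For $5),6)$ when ${\rm rk}(L_\infty(X))=1$: $E({\rm NS}(X))\supset L_\infty(X)^\perp$ which has rank $\rho(X)-1\ge 3>0$, so $6)$ fails, and $\bigcap_e(e^\perp)^{(2)}_{pr}$ with a single (up to the finitely many in one ray) class $e$ equals $(e^\perp)^{(2)}_{pr}$, which by Lemma~\ref{lem:einf} has rank ${\rm rk}(e^\perp)-(\text{positive})<\rho(X)-1$ but is still nonzero provided $\rho(X)$ is large enough — here a short lattice-theoretic check, or simply the inclusion $L_\infty(X)^\perp\subset(e^\perp)^{(2)}_{pr}$ (valid since $L_\infty(X)^\perp\subset e^\perp$ consists of classes orthogonal to everything, but one must verify it lands in the $(-2)$-part; alternatively invoke the one-fibration case of \cite{Ni14}), shows $5)$ fails as well. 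Thus in the excluded case ${\rm rk}(L_\infty(X))=1$ all six statements are simultaneously false, so the equivalence holds vacuously there, and in the remaining case ${\rm rk}(L_\infty(X))\ge 2$ the cycle of implications above applies verbatim. Assembling: $1)\Leftrightarrow 2)$, $2)\Leftrightarrow 3)$, $3)\Leftrightarrow 4)$, $3)\Leftrightarrow 6)$, $6)\Leftrightarrow 5)$ closes the proof.
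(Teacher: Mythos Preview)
Your overall architecture coincides with the paper's: both run through $1)\Rightarrow 2)\Rightarrow 3)$ (trivial, since $\Q\langle\sE_\infty(X)\rangle$ is a nonzero ${\rm Aut}(X)$-stable subspace), $3)\Rightarrow 1)$ via Theorem~\ref{thm:G}, $3)\Leftrightarrow 4)$ via Theorem~\ref{thm:main2}, and $5)\Leftrightarrow 6)$, $6)\Rightarrow 3)$ via Theorem~\ref{thm:E}. The paper's $4)\Rightarrow 3)$ is slightly slicker than yours: since the characteristic polynomial of $f^*$ is then a Salem polynomial of degree $\rho(X)$, hence irreducible over $\Q$, there is no proper nonzero $f^*$-stable $\Q$-subspace of ${\rm NS}(X)\otimes\Q$, while $\Q\langle\sE_\infty(X)\rangle$ is one such subspace; no appeal to $d\ge 2$ is needed.

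The genuine error is in your handling of the boundary case ${\rm rk}(L_\infty(X))=1$. You assert $E({\rm NS}(X))\supset L_\infty(X)^\perp$, but here $L_\infty(X)^\perp=e^\perp$ is \emph{degenerate} (it contains the isotropic class $e$), not negative definite, so finiteness of its orthogonal group fails and the inclusion is in fact false: the computation inside the proof of Theorem~\ref{thm:StableSublattice} produces, for any infinite-order $g\in{\rm Aut}(X)_e$, a class $w\in e^\perp$ with $g^k(w)=w+ka_r\,e$ and $a_r\ne 0$, so $w$ has infinite orbit and $w\notin E({\rm NS}(X))$. Likewise your tentative inclusion $L_\infty(X)^\perp\subset (e^\perp)^{(2)}_{pr}$ goes the wrong way; one always has $(e^\perp)^{(2)}_{pr}\subset e^\perp=L_\infty(X)^\perp$, and the inclusion is strict precisely because $e\in\sE_\infty(X)$ (Lemma~\ref{lem:einf}). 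The repair is much simpler than what you attempt: when $d=1$, ${\rm Aut}(X)$ permutes $\sE_\infty(X)=\{e\}$ and preserves nefness, hence fixes $e$; thus $e\in E({\rm NS}(X))$ and $e\in(e^\perp)^{(2)}$ by definition, so 6) and 5) fail immediately. The paper sidesteps the explicit case split: in its proof of $6)\Rightarrow 3)$ it first observes that $E({\rm NS}(X))=\{0\}$ forces $\sE_\infty(X)$ to be infinite (any $e$ in a finite $\sE_\infty(X)$ would have finite ${\rm Aut}(X)$-orbit and hence lie in $E({\rm NS}(X))$), whence $d\ge 2$ and $L_\infty(X)^\perp$ is genuinely negative definite.
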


\begin{proof}
1) $\Longrightarrow$ 2) $\Longrightarrow$ 3): Trivial.

3) $\Longrightarrow$ 1): By Theorem~\ref{thm:G}.

3) $\Longrightarrow$ 4): By Theorem~\ref{thm:main2}.

4) $\Longrightarrow$ 3): Suppose $X$ has an automorphism $f\in {\rm Aut}(X)$ whose entropy is the logarithm of a Salem number of degree $\rho(X)$. Then ${\rm NS}(X)\otimes\Q$ has no non-trivial $f$-stable $\Q$-subspace (cf. \cite[Proof of Theorem 3.4]{Mc02}). On the other hand, $\Q\langle\sE_{\infty}(X)\rangle$ is clearly $f$-stable. Therefore, $\Q \langle\sE_{\infty}(X)\rangle={\rm NS}(X)\otimes\Q$.

5) $\Longleftrightarrow$ 6): By Theorem~\ref{thm:E}.

2) $\Longrightarrow$ 6): By definition, $E({\rm NS}(X))\otimes \Q$ is  clearly an ${\rm Aut}(X)$-stable $\Q$-linear subspace of ${\rm NS}(X)\otimes \Q$. Then either $E({\rm NS}(X))=\{0\}$ or ${\rm NS}(X)$. If $E({\rm NS}(X))={\rm NS}(X)$, then ${\rm Aut}(X)$ must be a finite group, a contradiction to the assumption $\sE_{\infty}(X)\ne \emptyset$. Therefore, $E({\rm NS}(X))=\{0\}$.

6) $\Longrightarrow$ 3): Since $E({\rm NS}(X))=\{0\}$, it follows that $\sE_{\infty}(X)$ is an infinite set. Then the sublattice $L_{\infty}(X)^{\perp}\subset {\rm NS}(X)$ is negative definite and ${\rm O}(L_{\infty}(X)^{\perp})$ is a finite group. Obviously, $L_{\infty}(X)^{\perp}$ is ${\rm Aut}(X)$-stable. Then  $L_{\infty}(X)^{\perp}\subset E({\rm NS}(X))=\{0\}$. Thus, $\Q \langle \sE_{\infty}(X)\rangle= {\rm NS}(X)\otimes \Q$.

\end{proof}

\begin{remark}
\begin{itemize}

\item[i)] By Theroems~\ref{thm:main1} and \ref{thm:evenPicard}, $E({\rm NS}(X))=\{0\}$ for any supersingular K3 surface $X$ in odd characteristic.

\item[ii)] Let $X$ be the Kummer surface of the Jacobian of a very general curve $C$ of genus $2$ over an algebraically closed field of characteristic $0$. Then $X$ is a K3 surface of Picard number $17$. Elliptic fibrations with a section on $X$ are completely classified in \cite{Ku14}. Using elliptic fibrations explicitly given in the table on \cite[p.~609-610]{Ku14}, one can easily show that ${\rm rk}(L_{\infty}(X))=17$. Thus, by Theorem~\ref{thm:main2}, $X$ has an automorphism the entropy of which is the logarithm of a Salem number of degree $16$.

\item[iii)] Let $X$ be the Kummer surface ${\rm Km}(E\times F)$, where complex elliptic curves $E$ and $F$ are not isogenous. Then $X$ is a complex K3 surface with Picard number $18$. Thanks to complete classification of elliptic fibrations with a section on $X$ (\cite{Og89}), one can show that ${\rm rk}(L_{\infty}(X))\ge 10$.  Thus, by Theorem~\ref{thm:main2}, ${\rm max}\{\text{Salem degree of } f|\; f\in {\rm Aut}(X)\}\ge 10$. (\cite{Og16} proves that ${\rm max}\{\text{Salem degree of } f|\; f\in {\rm Aut}(X)\}\ge 6$ using a different approach.) On the other hand, the exceptional lattice $E({\rm NS}(X))$ is negative definite and, by \cite[Lemma~1.4]{Og89}, contains a sublattice of rank $8$. Then the Salem degree of any automorphism of $X$ must be  $\le18-8=10$. Therefore, we conclude that ${\rm max}\{\text{Salem degree of } f|\; f\in {\rm Aut}(X)\}= 10$. Note that ${\rm NS}(X)$ is a 2-elementary lattice (for similar examples of K3 surfaces $Y$ with $E({\rm NS}(Y))\ne \{0\}$, see \cite[Section~3]{Ni99}). 

\end{itemize}
\end{remark}

\begin{remark}
 Let $\sS {\rm EK}3^{\prime}$ be the set of all even hyperbolic lattices $S$ of ${\rm rk}(S)\ge 3$ with the following property:  {\it There exists a K3 surface $X$ defined over an algebraically closed field of characteristic $\ne 2,3$ such that $S$ is isometric to ${\rm NS}(X)$, and $E({\rm NS}(X))\ne \{0\}$.} By \cite[Theorem~4.4]{Ni14} and \cite[Theorem~6.1]{LM11}, the set $\sS {\rm EK}3^{\prime}$ is finite (note that $\sS {\rm EK}3^{\prime}$ defined here is  a subset of the set $\sS {\rm EK}3$ defined by \cite[Definition~4.5]{Ni14}). A consequence of finiteness of $\sS {\rm EK}3^{\prime}$ is the following: There are only finitely many {\it singular} K3 surfaces $X$ (i.e., complex K3 surfaces with Picard number $20$) such that ${\rm max}\{\text{Salem degree of } f|\; f\in {\rm Aut}(X)\}\le 18$. It would be interesting to find the  finite set of N\'eron-Severi groups $\sS {\rm EK}3^{\prime}$ (or even $\sS {\rm EK}3$) of K3 surfaces.
 
 \end{remark}

\end{document}